\documentclass{amsart}

\title{An analytic approach to the Ubiquity of geometric Brascamp--Lieb data}
\author{Mirei Watanabe}
\date{June 25, 2026}

\usepackage{amsfonts, amssymb, amsmath, bm}

\numberwithin{equation}{section}

\newtheorem{theorem}{Theorem}[section]
\newtheorem{lemma}[theorem]{Lemma}
\newtheorem{proposition}[theorem]{Proposition}
\newtheorem{corollary}[theorem]{Corollary}

\theoremstyle{definition}
\newtheorem{definition}[theorem]{Definition}

\begin{document}

\begin{abstract}
The ubiquity of geometric Brascamp--Lieb data, which means a certain kind of density of geometric data in the set of all feasible Brascamp--Lieb data has been studied recently. Relying substantially on the work of Dvir and Hu,
 we provide an analytic proof of ubiquity. Our argument also extends to the setting of quiver Brascamp--Lieb data.
\end{abstract}

\maketitle

\section{Introduction}

In their research on the adjoint Brascamp--Lieb inequality, Bennett and Tao showed that the finiteness of the adjoint Brascamp--Lieb constant is equivalent to the finiteness of the Brascamp--Lieb constant \cite{BT}. More recently, using density-type considerations, a new proof of part of this equivalence was given in \cite{BGTP}. This argument was based on the fact that so-called geometric data are dense (in a suitable sense) in the space of all feasible data. Geometric data are relatively tractable, and such density results, refered to as ubiquity in \cite{BGTP}, are expected to be useful in reducing certain proofs concerning feasible data to the case of geometric data. In this paper we use ideas from work by Dvir--Hu \cite{DH} to give a new proof of ubiquity of geometric Brascamp--Lieb data. In fact, our proof also works in the more general context of quiver Brascamp--Lieb data and so we also recover a very recent result by Chindris--Derksen \cite{CDN}.

\subsection{Brascamp--Lieb inequality}
Let us begin by introducing the form of the Brascamp--Lieb inequality.

\medskip
Given a family of linear transformations $\mathbf{B}=(B_j \in \mathbb{R}^{n_j \times {n}})_{j\in[m]}$ and a tuple of positive exponents $\mathbf{p}=(p_j)_{j \in [m]} $, a \textit{Brascamp--Lieb ineqality} is an inequality of the form

\begin{equation}\label{eq:BL}
\int_{\mathbb{R}^n} \prod_{j=1}^m f_j(B_j x)^{p_j} \, dx\le C \prod_{j=1}^m \left( \int_{\mathbb{R}^{n_j}} f_j \right)^{p_j}
\end{equation}

\noindent
which holds for all non-negative and integrable functions $f_j : \mathbb{R}^{n_j} \to \mathbb{R}$. We denote by $\mathrm{BL}({\mathbf{B}},\mathbf{p})$ the optimal constant in \eqref{eq:BL}, and by $\mathrm{BL_g}({\mathbf{B}},\mathbf{p})$ the optimal constant in \eqref{eq:BL} when $f_j(x):=\exp{(-\pi\langle A_jx,x\rangle)}$ for some $A_j\in\mathsf{PD}_{n_j}$. Here $\mathsf{PD}_{n_j}$ denotes the set of all real, self-adjoint and positive definite $n_j\times n_j$ matrices.

\medskip
\noindent
Hölder’s inequality, the Loomis--Whitney inequality and Young’s convolution inequality are important special cases. The Brascamp--Lieb inequality has been studied by many authors, and some useful results about the finiteness and the behaviour of the constant $\mathrm{BL_g}({\mathbf{B}},\mathbf{p})$ and $\mathrm{BL}({\mathbf{B}},\mathbf{p})$ have been obtained (see, for example \cite{Bar}, \cite{BCCT2}, \cite{BL} and \cite{BCCT}). A remarkable theorem of Lieb \cite{Lieb} gives us the following.

\begin{theorem}\label{Lieb}
    Let $(\mathbf{B},\mathbf{p})$ be a Brascamp--Lieb datum. Then $\mathrm{BL}(\mathbf{B},\mathbf{p})=\mathrm{BL_g}(\mathbf{B},\mathbf{p})$.
\end{theorem}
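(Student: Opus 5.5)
The inequality $\mathrm{BL_g}(\mathbf{B},\mathbf{p})\le \mathrm{BL}(\mathbf{B},\mathbf{p})$ is trivial, since Gaussians are admissible inputs in \eqref{eq:BL}. The content of Theorem \ref{Lieb} is the reverse inequality: for arbitrary non-negative integrable $f_j$, the left side of \eqref{eq:BL} is bounded by $\mathrm{BL_g}(\mathbf{B},\mathbf{p})\prod_j(\int f_j)^{p_j}$. If $\mathrm{BL_g}=\infty$ there is nothing to prove, so I assume it is finite. I would follow the heat-flow monotonicity approach of Carlen--Lieb--Loss and Bennett--Carbery--Christ--Tao \cite{BCCT}. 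By the monotone convergence theorem I may reduce to bounded, compactly supported $f_j$, and by homogeneity I may normalise $\int f_j=1$.

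First I would fix Gaussian data adapted to the problem. The finiteness of $\mathrm{BL_g}$ forces the scaling condition $n=\sum_j p_j n_j$ and $\bigcap_j\ker B_j=\{0\}$. I would first treat the case where an extremising Gaussian exists, i.e.\ there are $A_j\in\mathsf{PD}_{n_j}$ attaining $\mathrm{BL_g}$. The Euler--Lagrange equation for this extremiser reads
\begin{equation*}
A_j^{-1}=B_j\Bigl(\sum_k p_k B_k^{*}A_kB_k\Bigr)^{-1}B_j^{*}\qquad(j\in[m]).
\end{equation*}
It is obtained by differentiating $\log$ of the Gaussian ratio
\begin{equation*}
\frac{\prod_j(\det A_j)^{p_j/2}}{\det\bigl(\sum_j p_jB_j^*A_jB_j\bigr)^{1/2}}
\end{equation*}
in $A_j$. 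Next I evolve each $f_j$ under the heat flow adapted to $A_j$, namely $\partial_t u_j=\tfrac{1}{4\pi}\operatorname{div}(A_j^{-1}\nabla u_j)$. As $t\to\infty$, suitably rescaled, each $u_j$ converges to the centred Gaussian $(\det A_j)^{1/2}e^{-\pi\langle A_jx,x\rangle}$, whose integral is $1$.

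The key step is to show that
\begin{equation*}
Q(t)=\int_{\mathbb{R}^n}\prod_j u_j(t,B_jx)^{p_j}\,dx
\end{equation*}
is non-decreasing in $t$. Writing $v_j=\nabla\log u_j$ and $M=\sum_kp_kB_k^*A_kB_k$, a direct computation with integration by parts expresses $Q'(t)$ as $\int\prod_ju_j^{p_j}$ times a quadratic form in the vectors $\{A_j^{-1}v_j(B_jx)\}$. This form is non-negative precisely because $\Pi_j:=A_j^{-1/2}B_j^{*}\ldots$-type operators built from the Euler--Lagrange equation make $M^{-1/2}B_j^*A_j^{1/2}$ partial isometries with $\sum_jp_j(\text{projections})=I$. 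Positivity then reduces to the Cauchy--Schwarz/convexity inequality
\begin{equation*}
\Bigl|\sum_jp_jP_jw_j\Bigr|^2\le\sum_jp_j|w_j|^2 .
\end{equation*}
Given monotonicity, I compare $Q(0)$ with $\lim_{t\to\infty}Q(t)$, which equals the Gaussian value $\mathrm{BL_g}$. Scaling invariance from $n=\sum p_jn_j$ makes the large-$t$ limit exist. This gives $Q(0)\le \mathrm{BL_g}$, as desired.

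The main obstacle is that an extremising Gaussian need not exist even when $\mathrm{BL_g}<\infty$. I would handle this by approximation. Given $\varepsilon>0$, choose $A_j$ whose Gaussian ratio is within $\varepsilon$ of $\mathrm{BL_g}$. I would then either perturb the data $\mathbf{B}$ to ones that are ``simple'', for which extremisers exist, or use the regularised flow argument with the limiting value controlled by the near-extremal Gaussian. In the first route, lower semicontinuity of $\mathrm{BL}$ in $\mathbf{B}$ (Fatou) combined with continuity properties of $\mathrm{BL_g}$ would let me pass to the limit. Alternatively I would use the Ball/Barthe transportation argument \cite{Bar} as a fallback. Justifying the differentiation under the integral and the $t\to\infty$ limit for compactly supported $f_j$ is routine via Gaussian tail bounds.
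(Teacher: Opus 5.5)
The paper does not prove this statement. It is quoted from \cite{Lieb} and used as a black box (e.g.\ in Proposition~\ref{DHpropofBDD}), so your proposal has to stand on its own.

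Your gaussian-extremisable case is the standard heat-flow argument of \cite{BCCT} and is sound in outline. The Euler--Lagrange equation $A_j^{-1}=B_jM^{-1}B_j^{*}$ makes $(A_j^{1/2}B_jM^{-1/2})_j$ geometric, and monotonicity then reduces to $|\sum_jp_jP_jw_j|^2\le\sum_jp_j|w_j|^2$. The genuine gap is the non-extremisable case, which is where the theorem has content beyond this.

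Your first route cannot work as stated, because with $\mathbf{p}$ fixed you generally cannot approximate $\mathbf{B}$ by simple, or even gaussian-extremisable, data. Take $n=2$, $B_1x=x_1$, $B_2x=x_2$, $B_3x=x_1+x_2$, $\mathbf{p}=(1,\tfrac12,\tfrac12)$. Cauchy--Schwarz in $x_2$ gives $\mathrm{BL}=1$, and there are no extremisers.
\begin{itemize}
\item For every feasible $\mathbf{B}'$ with these exponents, $\ker B_1'$ is a critical line, so none is simple.
\item A geometric datum with these exponents satisfies $u_1u_1^{T}+\tfrac12(u_2u_2^{T}+u_3u_3^{T})=\mathrm{id}$, which forces $u_2=\pm u_3$. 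Hence $\mathbf{B}'$ is extremisable only if $B_2'\parallel B_3'$, which excludes a whole neighbourhood of $\mathbf{B}$.
\end{itemize}
Even granted approximants $\mathbf{B}^{(k)}\to\mathbf{B}$, Fatou only gives $\mathrm{BL}(\mathbf{B},\mathbf{p})\le\liminf_k\mathrm{BL}(\mathbf{B}^{(k)},\mathbf{p})$. You would then need $\limsup_k\mathrm{BL_g}(\mathbf{B}^{(k)},\mathbf{p})\le\mathrm{BL_g}(\mathbf{B},\mathbf{p})$. But $\mathrm{BL_g}$ is a supremum of continuous functions of $\mathbf{B}$, so it is only automatically \emph{lower} semicontinuous. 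The upper semicontinuity you need is the substantial half of the continuity theorem \cite{BBCF}, not a free ``continuity property''.

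Your second route has no mechanism behind it. Monotonicity uses the Euler--Lagrange identity exactly: after normalisation, $B_jB_j^{T}=\mathrm{id}$ is what turns $\sum_jp_j(\operatorname{div}v_j)(B_jx)$ into $\operatorname{div}_x\sum_jp_jB_j^{T}v_j(B_jx)$. For near-extremal, degenerating $A_j$, the defect produces an extra term $\sum_jp_j\operatorname{tr}\bigl(D^2\log u_j(B_jx)\,(\mathrm{id}-B_jB_j^{T})\bigr)$, which the positive quadratic form does not control. This is exactly the situation produced by Theorem~\ref{theorem:seq}: there $\sum_jp_j(B_j')^{T}B_j'=\mathrm{id}$ holds exactly, but $B_j'(B_j')^{T}$ is only $\varepsilon$-close to $\mathrm{id}$. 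The Ball--Barthe transport fallback likewise needs an exactly geometric (rescaled) datum, so it does not remove the obstacle.

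A correct way to finish is the structural induction on $n$ from \cite{BCCT}:
\begin{itemize}
\item Feasible simple data are gaussian-extremisable, so your heat-flow argument applies to them.
\item Otherwise, choose a proper critical subspace $V$. Use $\mathrm{BL}(\mathbf{B},\mathbf{p})\le\mathrm{BL}(\mathbf{B}_V,\mathbf{p})\,\mathrm{BL}(\mathbf{B}_{\mathbb{R}^n/V},\mathbf{p})$ for the restricted and quotient data, which follows from Fubini.
\item Combine this with the reverse inequality for $\mathrm{BL_g}$, which holds because $V$ is critical, and apply the induction hypothesis to both factors.
\end{itemize}
Lieb's original argument \cite{Lieb} is a different, complete route.
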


We next introduce geometric Brascamp--Lieb data, a special case of Brascamp--Lieb data.
\begin{definition}
    A Brascamp--Lieb datum $({\mathbf{B}},\mathbf{p})$ is called \textit{geometric} if
    \begin{equation}\label{eq:ordiproj}
        \sum^m_{j=1}p_jB_j^TB_j=id_{{\mathbb{R}^{n}}}
    \end{equation}
and
    \begin{equation}\label{eq:ordiadj}
        B_jB_j^T=id_{{\mathbb{R}^{n_j}}} \quad\text{for all $j\in[m]$}.
    \end{equation}
\end{definition}

\begin{theorem}\label{thm:ordigeo}
    Let $({\mathbf{B}},\mathbf{p})$ be a geometric Brascamp--Lieb datum. Then \[\mathrm{BL}({\mathbf{B}},\mathbf{p})=\mathrm{BL_g}({\mathbf{B}},\mathbf{p})=1.\]
\end{theorem}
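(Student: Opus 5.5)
By Theorem \ref{Lieb} it suffices to show $\mathrm{BL_g}(\mathbf{B},\mathbf{p})=1$. We establish the two inequalities $\mathrm{BL_g}\ge 1$ and $\mathrm{BL_g}\le 1$ separately.

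For the lower bound we take the standard Gaussians $f_j(x)=e^{-\pi|x|^2}$, so $\int f_j=1$. By \eqref{eq:ordiproj},
\[\sum_j p_j|B_jx|^2=\langle \textstyle\sum_j p_jB_j^TB_jx,x\rangle=|x|^2,\]
so the left side of \eqref{eq:BL} equals $\int e^{-\pi|x|^2}dx=1$. Hence $\mathrm{BL_g}\ge 1$.

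For the upper bound we use the Gaussian formula. With $f_j=e^{-\pi\langle A_jx,x\rangle}$ we have $\int f_j=(\det A_j)^{-1/2}$, and the left side of \eqref{eq:BL} equals $\det(\sum_jp_jB_j^TA_jB_j)^{-1/2}$. Thus $\mathrm{BL_g}\le1$ amounts to the determinant inequality
\[\prod_j(\det A_j)^{p_j}\le \det\Big(\sum_j p_jB_j^TA_jB_j\Big)\qquad\text{for all }A_j\in\mathsf{PD}_{n_j}.\]
Taking traces in \eqref{eq:ordiproj} and using \eqref{eq:ordiadj} gives $\sum_jp_jn_j=n$.

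To prove this inequality we take logarithms and apply concavity of $\log\det$, which is the main step. Write $A_j=e^{S_j}$ with $S_j$ symmetric, and set $M=\sum_jp_jB_j^TA_jB_j$. The key claim is
\[\log\det M\ge \operatorname{tr}\Big(\sum_jp_jB_j^TS_jB_j\Big)=\sum_jp_j\operatorname{tr}(S_j)=\sum_jp_j\log\det A_j,\]
where the middle equality uses $\operatorname{tr}(B_j^TS_jB_j)=\operatorname{tr}(S_jB_jB_j^T)=\operatorname{tr}S_j$ by \eqref{eq:ordiadj}.

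To justify the claim, the maps $P_j=B_j^TB_j$ are orthogonal projections, and the $B_j^T$ are isometries, so $B_j^Te^{S_j}B_j$ restricted to the range of $B_j^T$ equals $e^{B_j^TS_jB_j}$ there. The operators $X\mapsto B_j^TXB_j$ with weights $p_j$ form a unital positive map, since their sum at $X=I$ is the identity by \eqref{eq:ordiproj}. The inequality $\operatorname{tr}\log\Phi(e^S)\ge\operatorname{tr}\Phi(S)$ for a unital positive $\Phi$ then follows from operator concavity of $\log$, via the Jensen operator inequality $\log\Phi(X)\ge\Phi(\log X)$ for unital positive maps and $X>0$. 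Here $\Phi(X)=\sum_jp_jB_j^TX_jB_j$ acts on the block-diagonal operator $X=\oplus_jX_j$; it is unital, $\Phi(I)=I$, by \eqref{eq:ordiproj}. Applying it with $X=\oplus_jA_j$ and taking traces gives the claim.

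The main obstacle is justifying this Jensen step carefully. As an alternative we would follow Ball's route: a direct proof via the Cauchy--Binet formula, writing $\det M$ as a sum over subsets and applying AM--GM, which is the classical geometric Brascamp--Lieb argument.
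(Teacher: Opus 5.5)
Your argument is correct, but it does not follow the route behind this statement in the paper. The paper gives no proof of its own and simply quotes the result from Ball \cite{Ball} (rank one) and Barthe \cite{Bar} (general rank); see also \cite[Theorem 2.8]{BCCT}.

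\begin{itemize}
\item \textbf{Ball's route.} It evaluates the Gaussian constant by splitting each $B_j^TA_jB_j$ into rank-one pieces and applying Cauchy--Binet and AM--GM. This is the same computation the paper later reuses in Proposition \ref{DHpropofBDD}.
\item \textbf{Barthe's route.} His transport argument proves the inequality for arbitrary inputs directly, with no appeal to Lieb's theorem.
\item \textbf{Your route.} You reduce to Gaussians via Theorem \ref{Lieb} and obtain $\det(\sum_jp_jB_j^TA_jB_j)\ge\prod_j(\det A_j)^{p_j}$ from concavity of $\log$. Here \eqref{eq:ordiproj} supplies unitality and \eqref{eq:ordiadj} supplies $\operatorname{tr}(B_j^TS_jB_j)=\operatorname{tr}S_j$.
\end{itemize}

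Your route is shorter and basis-free: there is no diagonalisation of the $A_j$ and no expansion over $n$-subsets. On the other hand, it only computes $\mathrm{BL_g}$, so it rests on Lieb's theorem. It is also tied to the geometric normalisation. The Cauchy--Binet/AM--GM computation, by contrast, carries over to non-geometric data and yields the explicit determinant bounds of Proposition \ref{DHpropofBDD}.

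The Jensen step you flag as the main obstacle needs no operator concavity, because only its trace form is used.
\begin{itemize}
\item Let $V$ be the $(\sum_j n_j)\times n$ matrix obtained by stacking $\sqrt{p_1}B_1,\ldots,\sqrt{p_m}B_m$, and let $X=\mathrm{diag}(A_1,\ldots,A_m)$.
\item Then $V^TV=I_n$ by \eqref{eq:ordiproj}, and $M=V^TXV$. This also gives $M\ge(\min_j\lambda_{\min}(A_j))\,I$, so the Gaussian integral and $\log M$ are well defined.
\item Take an orthonormal eigenbasis $u_1,\ldots,u_n$ of $M$ with eigenvalues $\mu_i$. The vectors $w_i=Vu_i$ are unit vectors.
\item Scalar concavity of $\log$, applied to the spectral decomposition of $X$, gives $\log\mu_i=\log\langle Xw_i,w_i\rangle\ge\langle(\log X)w_i,w_i\rangle$.
\item Summing over $i$ gives $\log\det M\ge\operatorname{tr}(V^T(\log X)V)=\sum_jp_j\operatorname{tr}(B_j^TS_jB_j)$, which is your key claim.
\end{itemize}
Citing the Davis--Choi inequality instead is also fine. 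Your remark about $e^{B_j^TS_jB_j}$ on the range of $B_j^T$ is true but plays no role in the argument.
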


\noindent
Theorem \ref{thm:ordigeo} is first proved by Ball (rank one) \cite{Ball} and Barthe (general rank) \cite{Bar}. (See also \cite[Theorem 2.8]{BCCT}.) In general, it is difficult to compute $\mathrm{BL}({\mathbf{B}},\mathbf{p})$. So, geometric data are easier to handle in that respect.

 We say that the two tuples of matrices $\mathbf{B}$ and $\mathbf{B}'$ are \textit{equivalent} if there exist invertible matrices $(C_j\in\mathbb{R}^{n_j\times n_j})_{j\in[m]}$ and $C\in\mathbb{R}^{n\times n}$ such that for each $j\in[m]$, 
    \[B'_j=C_j^{-1}B_jC.\]
 According to \cite[Lemma 3.3]{BCCT}, we have
\begin{equation}\label{Unpoko}
    \mathrm{BL}({\mathbf{B}'},\mathbf{p})=\frac{\prod^m_{j=1}|\mathrm{det_{\mathbb{R}^{n_j}}(\textit{$C_j$})}|^{p_j}}{|\mathrm{det_{\mathbb{R}^n}(\textit{$C$})}|}\mathrm{BL}({\mathbf{B}},\mathbf{p}).
\end{equation}

\noindent
By the following theorem \cite{BGTP}, which is a density-type result, we may expect problems involving Brascamp--Lieb data with $\mathrm{BL}({\mathbf{B}},\mathbf{p})<\infty$, which we refer to as feasible, can be reduced to the case of geometric data. Bez, Gauvan and Tsuji proved this theorem using operator scalings \cite{GGOW}. Roughly speaking, operator scaling is an iterative normalisation procedure that rescales the linear maps so that they approach a geometric datum.

\begin{theorem}\label{theorem:ubiordi}
     Let $\mathbf{p}=(p_j)_{j\in[m]}$ be a tuple of positive real numbers. Then for any feasible datum $(\mathbf{B}, \mathbf{p})$, there exists some tuple of matrices $\mathbf{G}$ such that $(\mathbf{G}, \mathbf{p})$ is geometric and for any $\varepsilon>0$, there is some tuple of matrices $\mathbf{B}'$ which is equivalent to $\mathbf{B}$ such that \[\| \mathbf{G}-\mathbf{B}' \|< \varepsilon.\]
\end{theorem}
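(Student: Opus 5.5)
The plan is to follow the variational (``Gaussian near-extremiser'') strategy of Dvir--Hu. By Theorem~\ref{Lieb}, feasibility of $(\mathbf{B},\mathbf{p})$ means that $\mathrm{BL_g}(\mathbf{B},\mathbf{p})<\infty$. Equivalently, the functional
\[
\Phi(\mathbf{A})=\frac{\prod_{j=1}^m \det(A_j)^{p_j}}{\det\bigl(\sum_{j=1}^m p_jB_j^TA_jB_j\bigr)},\qquad \mathbf{A}=(A_j)_{j\in[m]}\in\prod_j\mathsf{PD}_{n_j},
\]
is bounded above, with $\sup\Phi=\mathrm{BL_g}(\mathbf{B},\mathbf{p})^2$. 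I will take a maximising sequence $\mathbf{A}^{(k)}$. To each term I associate the equivalent tuple
\[
B^{(k)}_j=(A^{(k)}_j)^{1/2}B_jC^{(k)},\qquad C^{(k)}=\Bigl(\sum_j p_jB_j^TA^{(k)}_jB_j\Bigr)^{-1/2}.
\]
By construction $\sum_j p_j (B^{(k)}_j)^TB^{(k)}_j=id_{\mathbb{R}^n}$, so the projection condition \eqref{eq:ordiproj} holds exactly for every $k$. What remains is to show that the second condition \eqref{eq:ordiadj} holds asymptotically, i.e.\ $B^{(k)}_j(B^{(k)}_j)^T\to id_{\mathbb{R}^{n_j}}$.

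The key step is an ``almost critical point'' argument. The transformation rule \eqref{Unpoko}, or a direct computation, shows that $\Phi$ is equivariant under this change of data. Consequently, perturbing $\mathbf{A}^{(k)}$ along $A_j\mapsto (A_j^{(k)})^{1/2}e^{tH_j}(A^{(k)}_j)^{1/2}$ with $H_j$ symmetric amounts to evaluating the functional of the datum $\mathbf{B}^{(k)}$ at $\mathbf{A}=(e^{tH_j})_j$. Define
\[
\psi_k(t)=\sum_j p_j t\,\mathrm{tr}H_j-\log\det\Bigl(\sum_j p_j (B_j^{(k)})^Te^{tH_j}B^{(k)}_j\Bigr).
\]
Then $\log\Phi$ at the perturbed point equals $\log\Phi(\mathbf{A}^{(k)})+\psi_k(t)$. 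Differentiating at $t=0$ and using the projection identity gives
\[
\psi_k'(0)=\sum_j p_j\,\mathrm{tr}\bigl(H_j(I-B^{(k)}_j(B^{(k)}_j)^T)\bigr).
\]
I will choose $H_j=I-B^{(k)}_j(B^{(k)}_j)^T$, so that $\psi_k'(0)=\sum_jp_j\|H_j\|_{F}^2=:g_k$.

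Next I need a uniform bound $|\psi_k''(t)|\le K\sum_j\|H_j\|^2$ for $|t|\le 1$, with $K$ independent of $k$. This should follow because the identity $\sum_jp_j(B^{(k)}_j)^TB^{(k)}_j=id$ forces $\|B^{(k)}_j\|\le p_j^{-1/2}$. Hence the $H_j$ are uniformly bounded, and the matrix $M(t)=\sum_j p_j(B^{(k)}_j)^Te^{tH_j}B^{(k)}_j$ is uniformly bounded above and below, since $M(t)\ge e^{-c}\,id$. Taylor's theorem then gives
\[
\psi_k(t)\ge t g_k-Kt^2g_k'
\]
with $g_k'\lesssim g_k$. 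Choosing $t\sim$ a small constant yields $\psi_k(t)\gtrsim g_k$. Since $\log\Phi(\mathbf{A}^{(k)})\to\log\sup\Phi<\infty$, we get $\log\Phi(\mathbf{A}^{(k)})+\psi_k(t)\le\log\sup\Phi$, which forces $g_k\to0$, i.e.\ $B^{(k)}_j(B^{(k)}_j)^T\to id$.

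Finally, the sequence $\mathbf{B}^{(k)}$ is bounded, so by compactness a subsequence converges to some $\mathbf{G}$. Passing to the limit in both identities, $\sum_j p_jG_j^TG_j=id$ and $G_jG_j^T=id$, so $(\mathbf{G},\mathbf{p})$ is geometric. Given $\varepsilon>0$, taking $\mathbf{B}'=\mathbf{B}^{(k)}$ with $k$ large gives a tuple equivalent to $\mathbf{B}$ with $\|\mathbf{G}-\mathbf{B}'\|<\varepsilon$. Because $\mathbf{G}$ is a single limit point, it is chosen independently of $\varepsilon$, as the statement requires. I expect the main obstacle to be the uniform second-derivative bound for $\psi_k$, i.e.\ making the constant $K$ genuinely independent of $k$ using only the normalisation \eqref{eq:ordiproj}. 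Everything else is either formal or compactness.
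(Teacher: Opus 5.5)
Your proof is correct, and it takes a genuinely different route from the paper's. The paper gets this statement as the case $k=1$, $\#\mathbf A_{1j}=1$ of Theorem~\ref{theorem:ubi}. It writes $Y_j=R_j\,\mathrm{diag}(e^{t_{js}})R_j^T$ and maximises over the orthogonal factors for each fixed $\bm t$. It then invokes the Dvir--Hu lemma (Lemma~\ref{lemma:gen}) to find a single $\bm t^*$ at which all $\partial f/\partial t_{js}$ are small. Finally, first-order optimality under Givens rotations, together with a block diagonalisation on repeated eigenvalues (Lemmas~\ref{lemma:max} and~\ref{lemma:orth}), makes $\sum_i\sum_a V_a'(V_a')^T-id$ exactly diagonal with entries $-\varepsilon_{js}/p_j$.

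You instead take an arbitrary maximising sequence and use a second-order estimate along the geodesics $A_j\mapsto A_j^{1/2}e^{tH_j}A_j^{1/2}$ in the gradient direction $H_j=id-B^{(k)}_j(B^{(k)}_j)^T$. The step you flagged does close. From $p_j(B^{(k)}_j)^TB^{(k)}_j\le id$ you get $\|H_j\|\le 1+p_j^{-1}=:c_j$. Hence $e^{-c}id\le M(t)\le e^{c}id$ for $|t|\le 1$, with $c=\max_jc_j$. Then $\psi_k''\ge-\mathrm{tr}(M^{-1}M'')\ge-e^{2c}\sum_j\|H_j\|_F^2$, since $\mathrm{tr}(M^{-1}M'M^{-1}M')\ge 0$ and $M''\ge 0$.

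Your route has several advantages. It is self-contained: no Lemma~\ref{lemma:gen}, no eigen-decomposition, and no special treatment of repeated eigenvalues. It is also quantitative, yielding $\sum_jp_j\|id-B^{(k)}_j(B^{(k)}_j)^T\|_F^2\le C(\mathbf p)\bigl(\log\sup\Phi-\log\Phi(\mathbf A^{(k)})\bigr)$. So \emph{every} maximising sequence of Gaussian inputs gives asymptotically geometric normalised data. It extends directly to quiver data, since $p_jV_a^TV_a\le id$ still holds there. Your compactness step is also cleaner: boundedness suffices because the limit satisfies both identities and is therefore geometric. The paper instead asserts that $\mathbf F(\mathbf p)$ is closed by continuity of the capacity. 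That does not follow, since continuity makes $\{\mathrm{Cap}>0\}$ open, not closed, and it is not needed.

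The paper's route has its own advantages. It uses only first-order information, with no smoothness estimates. It also produces a point at which the vectors $M_i\bm x_{a,js}$ are exactly orthogonal in the summed sense. That structure is reused in Section~3.3 for the Hadamard-inequality bounds on $\mathrm{BL}(\mathbf B,\mathbf p)$.
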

\medskip

Recently, the notion of Brascamp--Lieb datum has been extended to so-called quiver datum by Chindris and Derksen \cite{CDN}.

\medskip
\subsection{Capacity}
  Let $k$ and $m$ be positive integers. Let $(d_1,\ldots,d_k)$ and $(n_1,\ldots,n_m)$ be two tuples of positive integers and $\mathbf{p}=(p_1,\ldots, p_m)$ be a tuple of positive real numbers such that \[\sum_{i\in [k]}d_i=\sum_{j\in [m]}p_jn_j.\]

\noindent
For each pair $(i,j)$, let $\mathbf{A}_{ij}$ be a finite index set and denote \[\mathbf{V}:=(V_a\in {\mathbb{R}}^{n_j\times d_i} : a\in {\mathbf{A}}_{ij},i\in [k],j\in [m]).\]

\noindent
The pair $(\mathbf{V},\mathbf{p})$ is called a \textit{quiver datum}, and its \textit{capacity} is defined as follows:  
\medskip
\begin{equation}\label{eq:capp}
    \mathrm{Cap}(\mathbf{V},\mathbf{p}):=\inf_{(Y_j\in \mathsf{PD}_{n_j})_{j\in[m]} }\mathrm{Cap}(\mathbf{V},\mathbf{p};(Y_j)_{j\in[m]}),\end{equation}where
\begin{equation}\label{eq:cap}
    \mathrm{Cap}(\mathbf{V},\mathbf{p};(Y_j)_{j\in[m]}):=\frac{\prod^k_{i=1} \det_{{\mathbb{R}}^{d_i}}\big(\sum^m_{j=1}p_j\sum_{a\in\mathbf{A}_{ij}} V^T_aY_jV_a\big)}{\prod^n_{j=1}(\det_{{\mathbb{R}}^{n_j}}Y_j)^{p_{i}}}.
\end{equation}

\noindent
 Considering the special case $k=1$ and $\#\mathbf{A}_{1j}=1$ for all $j\in[m]$, we have \[\mathrm{Cap(\mathbf{B},\mathbf{p})=\mathrm{BL_g}(\mathbf{B},\mathbf{p})^{-2}}.\] Thus, by Lieb's theorem, we can study the finiteness and the behaviour of the constant $\mathrm{BL}(\mathbf{B},\mathbf{p})$ by studying the associated capacity. Chindris and Derksen have developed the theory of the capacity. Their work \cite{CDP} gives us important basic properties, following the ordinary theory of the Brascamp--Lieb inequality. Also, \cite{CDN} gives us the algebraicity of the Brascamp--Lieb constant in the case where the exponents are rational. From here, we introduce results regarding the positivity and extremals of  capacity, as well as geometricity of quiver data.

 We say that $(\mathbf{V},\mathbf{p})$ is \textit{feasible} when its capacity is strictly positive. Also, we say that the two tuples of matrices $\mathbf{V}$, $\mathbf{V}'$ are \textit{equivalent} if there exist some tuples of invertible linear maps $(M_i\in {\mathbb{R}}^{d_i\times d_i})_{i\in [k]}$ and $(N_j\in{\mathbb{R}}^{n_j\times n_j})_{j\in [m]}$ such that \[V_a'=N_jV_aM_i^{-1} \quad \text{for all $a\in \mathbf{A}_{ij}$, $i\in [k]$, $j\in [m]$}.\] 
From the calculation in \cite[Theorem 14]{CDP}, we obtain the following property.

\begin{proposition}\label{prop:equi}
    Assume that the two tuples of matrices $\mathbf{V}$, $\mathbf{V}'$ are equivalent. Then $(\mathbf{V},\mathbf{p})$ is feasible if and only if $(\mathbf{V}',\mathbf{p})$ is feasible.
\end{proposition}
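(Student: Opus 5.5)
The plan is to show directly that the capacity transforms multiplicatively under equivalence:
\[
\mathrm{Cap}(\mathbf{V}',\mathbf{p}) = c\,\mathrm{Cap}(\mathbf{V},\mathbf{p})
\]
for an explicit constant $c>0$ depending only on the matrices $M_i,N_j$. Positivity of one capacity is then equivalent to positivity of the other. Since equivalence is symmetric (replace $M_i,N_j$ by their inverses), it suffices to prove $\mathrm{Cap}(\mathbf{V}',\mathbf{p})\ge c\,\mathrm{Cap}(\mathbf{V},\mathbf{p})$, or simply to establish the identity.

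The key step is a change of variables in the infimum \eqref{eq:capp}. Fix $(Y_j)_{j\in[m]}$ with $Y_j\in\mathsf{PD}_{n_j}$ and set $Z_j:=N_j^TY_jN_j$. Then $Z_j\in\mathsf{PD}_{n_j}$, and the map $(Y_j)\mapsto(Z_j)$ is a bijection of $\prod_j\mathsf{PD}_{n_j}$ onto itself. For $a\in\mathbf{A}_{ij}$ we have
\[
V_a'^TY_jV_a' = M_i^{-T}V_a^TZ_jV_aM_i^{-1},
\]
so the $i$-th factor in the numerator of \eqref{eq:cap} equals
\[
\det(M_i)^{-2}\det\Big(\sum_j p_j\sum_{a\in\mathbf{A}_{ij}} V_a^TZ_jV_a\Big).
\]
For the denominator, $\det Y_j=\det Z_j\,\det(N_j)^{-2}$.

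Combining these gives
\[
\mathrm{Cap}(\mathbf{V}',\mathbf{p};(Y_j)) = \frac{\prod_j|\det N_j|^{2p_j}}{\prod_i|\det M_i|^{2}}\,\mathrm{Cap}(\mathbf{V},\mathbf{p};(Z_j)).
\]
Taking the infimum over $(Y_j)$, which is the same as the infimum over $(Z_j)$, yields the identity with
\[
c=\frac{\prod_j|\det N_j|^{2p_j}}{\prod_i|\det M_i|^2}\in(0,\infty).
\]

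There is no real obstacle here. The only care needed is bookkeeping: the exponent in the denominator of \eqref{eq:cap} should be read as $p_j$ with the product over $j\in[m]$, and one must check that $Z_j$ stays positive definite because $N_j$ is invertible. The identity also shows that $\mathrm{Cap}(\mathbf{V},\mathbf{p})=0$ if and only if $\mathrm{Cap}(\mathbf{V}',\mathbf{p})=0$, which is exactly the claim.
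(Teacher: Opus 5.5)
Your proof is correct and is essentially the calculation the paper invokes from \cite[Theorem 14]{CDP}: the substitution $Z_j=N_j^TY_jN_j$ is a bijection of $\prod_j\mathsf{PD}_{n_j}$, and it gives $\mathrm{Cap}(\mathbf{V}',\mathbf{p})=\frac{\prod_j|\det N_j|^{2p_j}}{\prod_i|\det M_i|^{2}}\,\mathrm{Cap}(\mathbf{V},\mathbf{p})$, so positivity of one capacity is equivalent to positivity of the other. You were also right to read the denominator of \eqref{eq:cap} as $\prod_{j\in[m]}(\det Y_j)^{p_j}$, which corrects the index typo in the paper.
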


\medskip
\noindent
We say that $(\mathbf{V},\mathbf{p})$ is \emph{extremisable} if there is some $(Y_j\in \mathsf{PD}_{n_j})_{j\in[m]}$ achieving the infimum on the right-hand side of \eqref{eq:capp}, and we call this $(Y_j\in \mathsf{PD}_{n_j})_{j\in[m]}$ an \textit{extremiser} of $(\mathbf{V},\mathbf{p})$. The following fact about extremisers is known (see \cite[Theorem 5]{CDN}).

\noindent
\begin{theorem} \label{theorem:ext}
Assume that $(\mathbf{V},\mathbf{p})$ is extremisable. Then $(Y_j\in \mathsf{PD}_{n_j})_{j\in[m]}$ is an extremiser of $(\mathbf{V},\mathbf{p})$ if and only if $(Y_j\in \mathsf{PD}_{n_j})_{j\in[m]}$ satisfies the following property:
\begin{equation}\label{extre1}
    X_i:=\sum^{m}_{j=1}p_{j}\sum_{a\in {\mathbf{A}}_{ij}}(V_a)^TY_jV_a   \quad \text{is invertible for all $i\in [k]$ }
\end{equation} and
\begin{equation}\label{extre2}
 \sum^{k}_{i=1}\sum_{a\in {\mathbf{A}}_{ij}}V_a{X_i}^{-1}(V_a)^T = {Y_j}^{-1} \quad \text{for all $j\in [m]$}.
\end{equation}
\end{theorem}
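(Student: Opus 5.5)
The natural approach is to reparametrise by $Z_j:=\log Y_j$, or equivalently to view the problem on the Riemannian manifold $\prod_j \mathsf{PD}_{n_j}$, where the objective becomes geodesically convex. Write
\[
F(\mathbf{Y}):=\log \mathrm{Cap}(\mathbf{V},\mathbf{p};\mathbf{Y})=\sum_{i=1}^k \log\det X_i(\mathbf{Y})-\sum_{j=1}^m p_j\log\det Y_j ,
\]
with $X_i(\mathbf{Y})=\sum_j p_j\sum_{a\in\mathbf{A}_{ij}}V_a^TY_jV_a$, which is linear in $\mathbf{Y}$. Since $\mathrm{Cap}(\mathbf{V},\mathbf{p})$ is attained at an extremiser, we have $\mathrm{Cap}(\mathbf{V},\mathbf{p})>0$ there. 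Hence every $X_i$ is invertible, for otherwise the numerator, and so the capacity, would vanish. This already gives \eqref{extre1} for any extremiser, and $F$ is finite and smooth near it.

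\textbf{Step 1 (critical point equations).} On the open set where all $X_i$ are invertible, differentiate $F$ in the direction $(H_j)_j$ of symmetric matrices. We get
\[
dF(\mathbf{Y})[\mathbf{H}]=\sum_i \mathrm{tr}\Bigl(X_i^{-1}\sum_j p_j\sum_{a\in\mathbf{A}_{ij}}V_a^TH_jV_a\Bigr)-\sum_j p_j\,\mathrm{tr}(Y_j^{-1}H_j)
=\sum_j p_j\,\mathrm{tr}\Bigl(H_j\Bigl(\sum_i\sum_{a\in\mathbf{A}_{ij}}V_aX_i^{-1}V_a^T-Y_j^{-1}\Bigr)\Bigr).
\]
Thus $dF=0$ if and only if \eqref{extre2} holds, since $p_j>0$ and the bracket is symmetric. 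An interior minimiser of $F$ over the open cone is a critical point, so every extremiser satisfies \eqref{extre1}--\eqref{extre2}. This gives the ``only if'' direction.

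\textbf{Step 2 (convexity).} For the converse it suffices to show that $F$ is geodesically convex on $\prod_j\mathsf{PD}_{n_j}$. Then any critical point is a global minimiser, hence an extremiser. Fix $\mathbf{Y}$ and symmetric $\mathbf{H}$, and consider the geodesic $Y_j(t)=Y_j^{1/2}e^{tH_j}Y_j^{1/2}$.
\begin{itemize}
\item The term $\log\det Y_j(t)=\log\det Y_j+t\,\mathrm{tr}H_j$ is affine in $t$.
\item The map $\mathbf{Y}\mapsto\log\det X_i(\mathbf{Y})$ is a composition of $\log\det$ with the positive linear map $\Phi_i(\mathbf{Y})=X_i(\mathbf{Y})$. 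By the standard fact that $\log\det\Phi$ is geodesically convex for a positive linear map $\Phi$, this term is convex along $t$.
\end{itemize}
To verify the standard fact I would diagonalise $H_j$ after absorbing $Y_j^{1/2}$ into $V_a$, which is allowed since $V_a\mapsto Y_j^{1/2}V_a$ is an equivalence. I would then write $X_i(t)=\sum_\ell e^{t\lambda_\ell}w_\ell w_\ell^T$ for suitable vectors $w_\ell$. The Cauchy--Binet expansion gives
\[
\det X_i(t)=\sum_S c_S\, e^{t\sum_{\ell\in S}\lambda_\ell},\qquad c_S\ge 0 .
\]
This is a positive combination of exponentials, so its logarithm is convex in $t$ (log-sum-exp). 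Convexity along every geodesic, together with the fact that any two points of $\prod_j\mathsf{PD}_{n_j}$ are joined by a geodesic, shows that a critical point minimises $F$. Here $\mathrm{Cap}(\mathbf{V},\mathbf{p};\mathbf{Y})>0$ at the point because of \eqref{extre1}, which keeps us in the region where $F$ is finite. Along the segment $\log\det X_i$ may become $-\infty$ only in the direction of decreasing $F$, which is harmless, since convexity in the extended sense still holds.

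\textbf{Main obstacle.} The main obstacle is Step 2: we need convexity along geodesics rather than in the flat structure, where $-\log\det Y_j$ is convex but $\log\det X_i$ is concave, so the naive approach fails. The Cauchy--Binet/log-sum-exp argument is what I would try first. It is also where the hypothesis of extremisability enters only mildly. Indeed, the ``if'' direction does not even need it, since it shows directly that a solution of \eqref{extre1}--\eqref{extre2} attains the infimum.
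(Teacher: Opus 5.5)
Your argument is correct once the positivity step below is repaired. It does not follow the paper's route, and it proves more than the paper does.

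\textbf{How the paper argues.} The paper proves only necessity, and it works in eigen-coordinates. It writes $Y_j=R_j\,\mathrm{diag}(e^{t_{j1}},\ldots,e^{t_{jn_j}})R_j^T$, so an extremiser becomes a maximiser of $f(\bm t,R_1,\ldots,R_m)$. Then $\partial f/\partial t_{js}=0$ gives the diagonal entries $\sum_i\sum_a\|M_i\bm x_{a,js}\|^2=e^{-t_{js}}$, as in \eqref{eq:norm} with $\varepsilon_{js}=0$. The off-diagonal entries vanish by the rotation argument of Lemma~\ref{lemma:orth} for distinct eigenvalues, and by the choice of eigenbasis in Lemma~\ref{lemma:max} for repeated ones. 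Conjugating \eqref{extrehenkei} back by $R_j$ then gives \eqref{extre2}. The ``if'' direction is not proved in the paper; it is cited from \cite[Theorem 5]{CDN}.

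\textbf{How your route compares.} Your Step~1 differentiates along arbitrary symmetric directions $H_j$. This produces every entry of \eqref{extre2} at once and removes the eigenbasis bookkeeping, in particular the repeated-eigenvalue case, where the spectral representation is not unique. What the paper's parametrisation buys is reuse: the same computation, combined with the Dvir--Hu lemma (Lemma~\ref{lemma:gen}), gives the approximate result Theorem~\ref{theorem:seq} when no extremiser exists. Your Step~2 is a correct, self-contained proof of the converse by geodesic convexity. Its Cauchy--Binet expansion is the one the paper uses in Proposition~\ref{DHpropofBDD}. As you note, it also shows that any solution of \eqref{extre1}--\eqref{extre2} forces extremisability.

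Within the paper one could prove the converse differently: $V_a'=Y_j^{1/2}V_aX_i^{-1/2}$ is then geometric, so $(id_{\mathbb{R}^{n_j}})_{j\in[m]}$ is an extremiser for $\mathbf V'$ by \cite[Theorem 3]{CDN}, and one transports back along the equivalence. Your convexity argument avoids depending on that theorem.

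\textbf{The step that needs repair.} ``Attained, hence positive'' is a non sequitur. Since $x^TX_ix=\sum_jp_j\sum_{a\in\mathbf A_{ij}}\langle Y_jV_ax,V_ax\rangle$, one has $\ker X_i=\bigcap_j\bigcap_{a\in\mathbf A_{ij}}\ker V_a$ for every $\mathbf Y$. If this kernel is nonzero for some $i$, then $\mathrm{Cap}(\mathbf V,\mathbf p;\cdot)\equiv0$. In that case every $\mathbf Y$ is an extremiser and \eqref{extre1} fails. So the statement itself needs feasibility, and the paper uses it explicitly (``By the feasibility of $(\mathbf V,\mathbf p)$, $X_i$ is invertible''). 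The same kernel formula shows that if \eqref{extre1} holds at one point it holds at every point. Your caveat about $\log\det X_i$ reaching $-\infty$ along a geodesic is therefore unnecessary.
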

\medskip

The fact that \eqref{extre1} and \eqref{extre2} are necessary for the existence of extremisers is proved in \cite[Theorem 20]{CDP}, using the same method introduced in \cite{BCCT}. Also, the converse is proved in \cite[Theorem 5]{CDN}.

In addition to the special cases of the quiver datum introduced already, we next introduce the notion of \emph{geometric} quiver datum.

\medskip

\noindent
\begin{definition}
 The quiver datum $(\mathbf{V}, \mathbf{p})$ is said to be \emph{geometric} if
\[
\sum^{m}_{j=1}p_{j}\sum_{a\in {\mathbf{A}}_{ij}}(V_a)^TV_a = id_{{\mathbb{R}}^{d_i}}  \quad \text{for each $i\in [k]$ }\]
and
\[
\sum^{k}_{i=1}\sum_{a\in {\mathbf{A}}_{ij}}V_a(V_a)^T = id_{{\mathbb{R}}^{n_j}}\quad\text{for each $j\in [m]$}.\]
\end{definition}

\medskip
\noindent
Chindris and Derksen proved that the capacity of a geometric datum is equal to one \cite[Theorem 3]{CDN}. 

\begin{theorem}
    If $(\mathbf{V}, \mathbf{p})$ is a geometric quiver datum, then $\mathrm{Cap}(\mathbf{V}, \mathbf{p})=1$.
\end{theorem}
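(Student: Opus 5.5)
Two bounds: evaluating at $Y_j=id_{\mathbb{R}^{n_j}}$ gives $\mathrm{Cap}\le 1$, and a concavity argument gives $\mathrm{Cap}(\mathbf{V},\mathbf{p};(Y_j)_{j\in[m]})\ge 1$ for every admissible tuple. For the upper bound, the first geometric condition gives $X_i=\sum_j p_j\sum_{a\in\mathbf{A}_{ij}}V_a^TV_a=id_{\mathbb{R}^{d_i}}$, so numerator and denominator of \eqref{eq:cap} both equal $1$, hence $\mathrm{Cap}(\mathbf{V},\mathbf{p})\le 1$. (Equivalently, $Y_j=id$ satisfies \eqref{extre1} and \eqref{extre2} by the two geometric identities, but using Theorem \ref{theorem:ext} would need extremisability in advance, so I argue the lower bound directly.)

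For the lower bound, fix $(Y_j)_{j\in[m]}$ and take logarithms; we must show
\[
\sum_{i=1}^k \log\det\Big(\sum_{j}p_j\sum_{a\in\mathbf{A}_{ij}}V_a^TY_jV_a\Big)\ \ge\ \sum_{j=1}^m p_j\log\det Y_j .
\]
For fixed $i$, the matrices $V_a^TY_jV_a$ carry weights $p_j$ and $\sum_j p_j\sum_a V_a^TV_a=id_{\mathbb{R}^{d_i}}$. I use the concavity of $\log\det$ in the form of a Jensen inequality for this operator-valued partition of unity. Write $Y_j=\exp(L_j)$ with $L_j$ symmetric. Trace-form Jensen: for a positive combination $\sum_\alpha W_\alpha^T Z_\alpha W_\alpha$ with $\sum_\alpha W_\alpha^TW_\alpha=id$ and $Z_\alpha\in\mathsf{PD}$,
\[
\log\det\Big(\sum_\alpha W_\alpha^TZ_\alpha W_\alpha\Big)\ \ge\ \sum_\alpha \mathrm{tr}\big(W_\alpha^T(\log Z_\alpha) W_\alpha\big).
\]
This follows from operator concavity of $\log$: the map $\Phi(Z)=\sum_\alpha W_\alpha^TZ_\alpha W_\alpha$ on block-diagonal matrices is unital positive, so the Jensen operator inequality (Choi/Davis) gives $\log\Phi(Z)\ge\Phi(\log Z)$; taking traces and using $\mathrm{tr}\log=\log\det$ gives the claim. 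Apply this with $W_\alpha=\sqrt{p_j}V_a$ and $Z_\alpha=Y_j$, where $\alpha=(j,a)$ ranges over $a\in\mathbf{A}_{ij}$.

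Summing over $i$ gives the lower bound
\[
\sum_i\sum_j p_j\sum_{a\in\mathbf{A}_{ij}}\mathrm{tr}(V_a^TL_jV_a)=\sum_j p_j\,\mathrm{tr}\Big(L_j\sum_i\sum_{a\in\mathbf{A}_{ij}}V_aV_a^T\Big)=\sum_j p_j\,\mathrm{tr}L_j=\sum_jp_j\log\det Y_j,
\]
using the second geometric condition. This shows $\mathrm{Cap}(\mathbf{V},\mathbf{p};(Y_j)_{j\in[m]})\ge1$, and combined with the first paragraph, $\mathrm{Cap}(\mathbf{V},\mathbf{p})=1$.

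The main obstacle is justifying the matrix Jensen step rigorously. The cleanest route is Davis's inequality for the operator concave function $\log$ under the unital completely positive map $\Phi$. Alternatively, one can avoid operator concavity by a convexity argument along $t\mapsto(Y_j^t)$. I would first try the Davis–Choi route, checking that $\Phi$ is unital precisely because of the first geometric condition.
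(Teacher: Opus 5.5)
The paper gives no proof of this statement; it is quoted from \cite[Theorem 3]{CDN}. Your argument is correct and self-contained. It is the quiver analogue of the classical proof that a doubly stochastic operator has capacity one.

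\textbf{How the two identities enter.} Fix $i$ and write $W$ for the column stack of the $W_\alpha$. The first geometric identity makes your $\Phi$ unital, so $X_i=W^TZW$ is a compression of the block-diagonal $Z$ (blocks $Z_\alpha=Y_j$) by an isometry. In particular $X_i\succeq \min_j\lambda_{\min}(Y_j)\, id_{\mathbb{R}^{d_i}}$, so $\log X_i$ is defined. The second identity is exactly what turns the sum over $i$ of $\mathrm{tr}\,\Phi(\log Z)$ into $\sum_j p_j\log\det Y_j$.

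\textbf{Comparison with the paper's tools.} The only relevant tool the paper develops runs the wrong way. Section 3.1 re-derives that an extremiser must satisfy \eqref{extre1}--\eqref{extre2}. To conclude from the true fact that $Y_j=id$ satisfies those equations, you would need the converse from \cite[Theorem 5]{CDN} plus extremisability, which is essentially what is being proved. You were right to avoid that shortcut. Your route uses only the two defining identities and concavity of $\log$, uniformly in $k$ and in the multiplicities $\#\mathbf{A}_{ij}$.

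\textbf{The step you flag as the main obstacle.} It needs less than Davis--Choi, because only the trace of the operator inequality is used. Take an orthonormal eigenbasis $u_1,\dots,u_{d_i}$ of $X_i=W^TZW$, and let $\zeta_\ell$ be the eigenvalues of $Z$ with orthonormal eigenvectors $e_\ell$. Since $\|Wu_k\|=1$, the eigenvalue $\mu_k=\langle ZWu_k,Wu_k\rangle$ is the convex combination $\sum_\ell \langle e_\ell,Wu_k\rangle^2\zeta_\ell$. Scalar concavity of $\log$ then gives $\log\mu_k\ge\langle(\log Z)Wu_k,Wu_k\rangle$. Summing over $k$ yields $\log\det X_i\ge\mathrm{tr}\big(W^T(\log Z)W\big)$. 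Quoting the Jensen operator inequality is also fine, since $\log$ is operator concave and $\Phi$ is unital and completely positive.
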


In \cite{CDN}, Chindris and Derksen also proved an extension of Theorem \ref{theorem:ubiordi} to quiver data. They proved it by algebraic methods via a Jordan--Holder filtration.

\noindent
\begin{theorem}\label{theorem:ubi}
     Let $\mathbf{p}=(p_j)_{j\in[m]}$ be a tuple of positive real numbers. Then for any feasible quiver datum $(\mathbf{V}, \mathbf{p})$, there exists some tuple of matrices $\mathbf{G}$ such that $(\mathbf{G}, \mathbf{p})$ is geometric and for any $\varepsilon>0$, there is some tuple of matrices $\mathbf{V}'$ which is equivalent to $\mathbf{V}$ such that \[\| \mathbf{G}-\mathbf{V}' \|< \varepsilon.\]
\end{theorem}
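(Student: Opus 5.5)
We first observe that if a quiver datum $(\mathbf{V},\mathbf{p})$ is extremisable, the statement holds with $\mathbf{G}$ itself equivalent to $\mathbf{V}$. Indeed, take an extremiser $(Y_j)_{j\in[m]}$ and set $X_i$ as in \eqref{extre1}, which are positive definite by Theorem \ref{theorem:ext}. Define
\[
G_a:=Y_j^{1/2}V_aX_i^{-1/2}\quad (a\in\mathbf{A}_{ij}).
\]
Then $\sum_j p_j\sum_a G_a^TG_a=X_i^{-1/2}X_iX_i^{-1/2}=id$. Moreover, \eqref{extre2} gives $\sum_i\sum_a G_aG_a^T=Y_j^{1/2}Y_j^{-1}Y_j^{1/2}=id$. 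So $(\mathbf{G},\mathbf{p})$ is geometric and equivalent to $\mathbf{V}$, with $N_j=Y_j^{1/2}$ and $M_i=X_i^{1/2}$. The whole difficulty is therefore the feasible but non-extremisable case.

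For that case we follow the Dvir--Hu strategy. Feasibility means $\mathrm{Cap}>0$, so we can pick $(Y_j)$ with $\mathrm{Cap}(\mathbf{V},\mathbf{p};(Y_j))$ arbitrarily close to the infimum. After the normalisation above (which uses only $X_i,Y_j$ and not extremality), we get an equivalent datum $\mathbf{V}^{(\delta)}$. It satisfies the first geometric condition exactly, and the second up to an error controlled by the gap $\mathrm{Cap}(\mathbf{V},\mathbf{p};(Y_j))-\mathrm{Cap}(\mathbf{V},\mathbf{p})$. This is a quantitative gradient estimate: near-minimisers of the geodesically convex function $\log\mathrm{Cap}(\mathbf{V},\mathbf{p};\cdot)$ on the product of PD cones have small gradient. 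Concretely, we would reparametrise $Y_j\mapsto Y_j^{1/2}e^{tH_j}Y_j^{1/2}$. We then use convexity in $t$, together with a uniform bound on second derivatives for the normalised datum, to show that
\[
\Big\|\sum_i\sum_a V^{(\delta)}_a(V^{(\delta)}_a)^T-id\Big\|\to0 .
\]

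Next we extract the limit. The normalised data are bounded, since $\sum_j p_j\sum_a\|G_a\|_F^2=\sum_i d_i$, so by compactness a subsequence converges to some $\mathbf{G}$. In the limit both geometric identities hold exactly, so $(\mathbf{G},\mathbf{p})$ is geometric. Each $\mathbf{V}^{(\delta)}$ is equivalent to $\mathbf{V}$ by construction, which gives the $\varepsilon$-approximation. By Proposition \ref{prop:equi}, feasibility is preserved along the way.

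The main obstacle is the gradient estimate: turning ``near-optimal value'' into ``near-geometric'' with a uniform constant. A plain smoothness bound on the log-capacity along geodesics (Dvir--Hu's argument) would suffice. We would try first to compute the second derivative in $t$ explicitly, bound it by a constant depending only on $\sum_i d_i$ and $\mathbf{p}$ via the normalisation, and then apply the standard inequality $f(0)-\inf f\ge \|\nabla f\|^2/(2L)$.
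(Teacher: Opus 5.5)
Your proposal is correct. Its final step is exactly the paper's proof of this theorem from Theorem \ref{theorem:seq}: normalise, bound the data via $\sum_j p_j\sum_a\|G_a\|_F^2=\sum_i d_i$, extract a convergent subsequence, and pass both identities to the limit.

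Where you differ is in how the near-geometric equivalent data are produced. The paper writes $Y_j=R_j\,\mathrm{diag}(e^{t_{js}})R_j^T$ and maximises $-\log\mathrm{Cap}$ over the rotations for each fixed $\bm{t}$. It then uses the Dvir--Hu lemma (Lemma \ref{lemma:gen}) to find $\bm{t}^*$ with all $\partial f/\partial t_{js}$ small. Finally, first-order optimality in the rotations (Lemmas \ref{lemma:max} and \ref{lemma:orth}) makes $\sum_i\sum_a V_a'(V_a')^T$ exactly diagonal with entries $1-\varepsilon_{js}/p_j$.

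You instead normalise at an arbitrary near-minimiser and use a descent-lemma estimate. This works, and the computation you defer is short. Set $W_a=Y_j^{1/2}V_aX_i^{-1/2}$, $S_j=\sum_i\sum_a W_aW_a^T$, $\Psi_i(H)=\sum_j p_j\sum_a W_a^TH_jW_a$ and $\varphi(t)=\log\mathrm{Cap}(\mathbf{V},\mathbf{p};(Y_j^{1/2}e^{tH_j}Y_j^{1/2})_j)$. Then $\varphi'(0)=\sum_j p_j\operatorname{tr}(H_j(S_j-I))$ and $\varphi''(0)=\sum_i\big[\operatorname{tr}\Psi_i(H^2)-\operatorname{tr}(\Psi_i(H)^2)\big]\le(\sum_i d_i)\max_j\|H_j\|_{\mathrm{op}}^2$. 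The inequality comes from dropping the negative term and using the normalisation $\sum_j p_j\sum_a W_a^TW_a=I$.

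Two things should be stated explicitly. First, the bound holds along the whole geodesic, not just at $t=0$. Re-basing at $Y_j(t_0)$ gives $Y_j(t_0)^{1/2}e^{sU_jH_jU_j^T}Y_j(t_0)^{1/2}$ with $U_j$ orthogonal, so renormalising there yields the same constant. Second, no convexity is needed, only this upper bound on $\varphi''$. Choosing $H_j=S_j-I$ then gives $\max_j\|S_j-I\|_F^2\le 2(\sum_i d_i)(\min_j p_j)^{-2}\big(\log\mathrm{Cap}(\mathbf{V},\mathbf{p};(Y_j))-\log\mathrm{Cap}(\mathbf{V},\mathbf{p})\big)$.

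Your route has several advantages:
\begin{itemize}
\item It is quantitative.
\item It avoids the eigendecomposition and the Dvir--Hu lemma, which the paper quotes without proof.
\item It shows that every minimising sequence for the capacity yields near-geometric data.
\item It contains the extremisable case as the case of zero gap, so your separate treatment of that case via Theorem \ref{theorem:ext} is redundant.
\end{itemize}
The paper's route needs only first-order information. It also produces an error that is exactly diagonal, a structure the paper reuses for the Brascamp--Lieb upper bounds in its final subsection.

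Your compactness step is also slightly cleaner than the paper's. You only need the set cut out by the first identity to be closed and bounded. The paper instead puts $\mathrm{Cap}>0$ into $\mathbf{F}(\mathbf{p})$ and appeals to continuity of the capacity for closedness. That is unnecessary, since the limit is geometric and so has capacity one anyway.
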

\medskip

Unlike the proof of ubiquity by Bez, Gauvan, and Tsuji using operator scalings, or the proof by Chindris and Derksen based on algebraic methods, this paper provides a completely new proof of Theorem \ref{theorem:ubi} using analytic techniques. With the goal of establishing ubiquity, we prove the following theorem.
\medskip

\noindent
\begin{theorem}\label{theorem:seq}
 Assume that the quiver  datum $(\mathbf{V},\mathbf{p})$ is feasible. Then for each $\varepsilon>0$, there exists some quiver datum $\mathbf{V}'=(V_a')_a$ equivalent to $\mathbf{V}$ such that 
\begin{equation}\label{eq:proj}
\sum^{m}_{j=1}p_{j}\sum_{a\in {\mathbf{A}}_{ij}}(V_a')^TV_a' = id_{{\mathbb{R}}^{d_i}}  \quad \text{for each $i\in [k]$ }
\end{equation}
and
\begin{equation}\label{eq:adj}
\Big\| \sum^{k}_{i=1}\sum_{a\in {\mathbf{A}}_{ij}}V_a'(V_a')^T - id_{{\mathbb{R}}^{n_j}} \Big\|< \varepsilon \quad \text{for each $j\in [m]$ }.
\end{equation}
\end{theorem}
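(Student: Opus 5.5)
The plan follows the Dvir--Hu idea: a near-minimiser of the capacity gives a normalisation of $\mathbf{V}$ that is almost geometric. The exact condition \eqref{eq:proj} is built into the normalisation. The approximate condition \eqref{eq:adj} comes from a first-order optimality argument with a quantitative second-order error.

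\textbf{Step 1 (normalisation).} Since $(\mathbf{V},\mathbf{p})$ is feasible, $c:=\mathrm{Cap}(\mathbf{V},\mathbf{p})>0$. It is finite since, e.g., $Y_j=id$ gives a finite value. Fix $\delta>0$, to be chosen in terms of $\varepsilon$, and pick $(Y_j)_j$ with $\mathrm{Cap}(\mathbf{V},\mathbf{p};(Y_j))\le (1+\delta)c$. A finite positive value forces each $X_i:=\sum_j p_j\sum_{a\in\mathbf{A}_{ij}}V_a^TY_jV_a$ to be invertible. Define
\[
V_a':=Y_j^{1/2}V_aX_i^{-1/2}\qquad (a\in\mathbf{A}_{ij}).
\]
This is equivalent to $\mathbf{V}$, and $\sum_j p_j\sum_a (V_a')^TV_a'=X_i^{-1/2}X_iX_i^{-1/2}=id$, which is \eqref{eq:proj}. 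Put
\[
Z_j:=\sum_i\sum_a V_a'(V_a')^T=Y_j^{1/2}\Big(\sum_i\sum_a V_aX_i^{-1}V_a^T\Big)Y_j^{1/2}.
\]
It remains to show $\|Z_j-id\|$ is small.

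\textbf{Step 2 (transfer of near-minimality).} A direct computation from \eqref{eq:cap} gives
\[
\mathrm{Cap}(\mathbf{V}',\mathbf{p};(W_j))=\frac{\mathrm{Cap}(\mathbf{V},\mathbf{p};(Y_j^{1/2}W_jY_j^{1/2}))}{\mathrm{Cap}(\mathbf{V},\mathbf{p};(Y_j))}.
\]
Hence $\mathrm{Cap}(\mathbf{V}',\mathbf{p};(id))=1$ and $\mathrm{Cap}(\mathbf{V}',\mathbf{p};(W_j))\ge 1/(1+\delta)$ for all $(W_j)$. So the identity is a $\delta$-near-minimiser for $\mathbf{V}'$.

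\textbf{Step 3 (quantitative first-order condition; the main step).} Fix symmetric $H=(H_j)_j$ with $\max_j\|H_j\|\le1$ and set
\[
f(t):=\log\mathrm{Cap}(\mathbf{V}',\mathbf{p};(id+tH_j)),\qquad |t|\le\tfrac12 .
\]
Write $N_i:=\sum_jp_j\sum_a(V_a')^TH_jV_a'$. Then
\[
f(t)=\sum_i\log\det(id+tN_i)-\sum_jp_j\log\det(id+tH_j),
\]
so $f(0)=0$ and
\[
f'(0)=\sum_i\mathrm{tr}N_i-\sum_jp_j\mathrm{tr}H_j=\sum_jp_j\,\mathrm{tr}\big(H_j(Z_j-id)\big).
\]
For the second derivative, the first sum contributes $-\sum_i\mathrm{tr}\big(((id+tN_i)^{-1}N_i)^2\big)$ and the second contributes a nonnegative term. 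By \eqref{eq:proj}, $\|N_i\|\le\|H\|\le1$, so $\|(id+tN_i)^{-1}N_i\|\le 2$ and $f''(t)\ge -C:=-4\sum_id_i$.

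The key uniformity is that this constant does not depend on $\mathbf{V}$ or $Y$; that is exactly what the exact normalisation \eqref{eq:proj} buys. Taylor's theorem gives
\[
-\log(1+\delta)\le f(t)\le tf'(0)+Ct^2/2 .
\]
Choosing $t=-s\,\mathrm{sign}f'(0)$ with $s=\min\{\tfrac12,|f'(0)|/C\}$ yields $|f'(0)|\le \max\{\sqrt{2C\log(1+\delta)},\,4\log(1+\delta)+C/2\cdot\tfrac12\}$. For small $\delta$ the first alternative holds, giving $|f'(0)|\lesssim\sqrt{C\delta}$.

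\textbf{Step 4 (conclusion).} Fix $j$ and take $H_j$ to be the rank-one projection onto a unit eigenvector of $Z_j-id$ for its eigenvalue of largest modulus, with the other $H_{j'}=0$. Step 3 then gives
\[
p_j\|Z_j-id\|\le O\big(\sqrt{\delta\textstyle\sum_id_i}\big).
\]
Choosing $\delta$ small relative to $\varepsilon^2\min_jp_j^2/\sum_id_i$ gives \eqref{eq:adj} for all $j$.

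The only delicate points are the uniform lower bound on $f''$ and the choice of $s$ in Step 3. Everything else is algebraic bookkeeping.
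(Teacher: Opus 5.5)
Your Step 3 uses the second-derivative bound in the wrong direction. You bound the $\log\det(id+tN_i)$ terms using $\|N_i\|\le 1$ and discard the other term as nonnegative. That gives $f''\ge -C$, and Taylor's theorem then yields only the \emph{lower} bound $f(t)\ge tf'(0)-Ct^2/2$. The descent argument needs the \emph{upper} bound $f(t)\le tf'(0)+Ct^2/2$, i.e.\ $f''\le C$, and the term you set aside is exactly the one that must be controlled.

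The repair is immediate. The map $t\mapsto\log\det(id+tN_i)$ is concave, so those terms contribute $\le 0$ to $f''$. For $|t|\le\frac12$ and $\|H_j\|\le 1$, every eigenvalue of $(id+tH_j)^{-1}H_j$ has modulus at most $2$, so
\[
f''(t)\le\sum_{j}p_j\operatorname{tr}\big(((id+tH_j)^{-1}H_j)^2\big)\le 4\sum_j p_jn_j=4\sum_i d_i .
\]
Your displayed inequality is therefore true with the same $C$, because $\sum_id_i=\sum_jp_jn_j$, and the rest of Steps 3--4 goes through unchanged. With the rank-one $H$ of Step 4 you even get $f''\le 4p_j$, hence $\|Z_j-id\|=O(\sqrt{\delta/p_j})$. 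The uniformity comes from the base point $W=id$ together with $\|H_j\|\le1$, not from $\|N_i\|\le 1$. What \eqref{eq:proj} buys is the clean numerator $\det(id+tN_i)$, the value $f(0)=0$, and the formula for $f'(0)$.

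With that correction your proof is valid and genuinely different from the paper's. The paper argues as follows:
\begin{itemize}
\item It writes $Y_j=R_j\,\mathrm{diag}(e^{t_{js}})R_j^T$ and, for each fixed $\bm t$, maximises $-\log\mathrm{Cap}$ over the compact group $\prod_jO_{n_j}(\mathbb{R})$.
\item It invokes the Dvir--Hu lemma to find $\bm t^*$ at which all partial derivatives in $t_{js}$ are small.
\item It shows separately that the off-diagonal entries vanish exactly. This uses first-order optimality in plane rotations, plus a rotation inside each eigenspace when some $t_{js}$ coincide.
\end{itemize}
The result is that $\sum_i\sum_aV_a'(V_a')^T$ is exactly diagonal, with entries $1-\varepsilon_{js}/p_j$.

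Your argument (near-minimiser, two-sided renormalisation, transfer of near-minimality, descent in an arbitrary symmetric direction) handles diagonal and off-diagonal deviations at once. It needs neither the compactness/rotation lemma nor the Dvir--Hu lemma, and it gives an explicit rate in terms of the capacity gap $\delta$. If exact diagonality is wanted, an orthogonal change of basis diagonalising $Z_j$ recovers it and preserves \eqref{eq:proj}. The paper's $(\bm t,R)$ coordinates, for their part, are reused in Section~3 for the Cauchy--Binet and Hadamard-type bounds.
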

\medskip
\medskip

In Section 2, we study the logarithm of capacity and prove Theorem \ref{theorem:seq}. The proof is based on the work of Dvir and Hu \cite{DH}.

In Section 3, we show that \eqref{extre1} and \eqref{extre2} of Theorem \ref{theorem:ext} are necessary for the existence of extremisers using the proof of Theorem \ref{theorem:seq}. In addition, we prove Theorem \ref{theorem:ubi} using the result of Theorem \ref{theorem:seq}. After that, using ubiquity, we provide an upper bound for Brascamp--Lieb constants for certain data.

\medskip

\medskip
\section{Proof of Theorem \ref{theorem:seq}}

 Guided by the proof of \cite[Theorem 1.2]{DH}, we now proceed to prove Theorem \ref{theorem:seq}.

\medskip

\subsection{Preliminaries}

First, we reduce the positivity of the capacity to the boundedness of a function. Similar reductions are carried out in \cite[Proposition 6]{Bar}.

\medskip
For each $j\in [m]$ and  $Y_j\in \mathsf{PD}_{n_j}$, there exists some orthogonal matrix $R_j\in O_{n_j}(\mathbb{R})$ and $t_{j1},\ldots,t_{jn_j}\in \mathbb{R}$ such that

\medskip

\[Y_j=R_j\begin{pmatrix}
  \exp{{t_{j1}}}                                                \
            &        & \text{\huge{0}}   \\
         &  \ddots               &                      \\
          \text{\huge{0}} &        &            
                                         \exp{t_{jn_j}}
\end{pmatrix}R_j^T.\]
\medskip

\noindent
The middle matrix is a diagonal matrix with $\exp{{t_{js}}}$  as its $(s,s)$ entry. For each $j\in [m]$, let $[{\bm{x}}_{a,j1},\ldots,{\bm{x}}_{a,jn_j}]$ be a representation of the matrix $V_a^TR_j$ as a list of column vectors. Let $N=\sum_{j\in[m]} n_j$ and define $f:\mathbb{R}^{N}\times \prod^m_{j=1} O_{n_j}(\mathbb{R}) \to \mathbb{R}$ as follows:
\[f(\bm{t},R_1,\ldots,R_{m}):= \langle\bm{\gamma},\bm{t}\rangle-\sum^k_{i=1}\log \det X_i,\] where $\bm{\gamma}=(\gamma_e)_{e\in[N]}=(\gamma_{js})_{j\in [m],s\in [n_{j}]}$, $\gamma_{js}=p_j$ for each $j\in [m],s\in [n_{j}]$ and 

\begin{align*}
X_i&=\sum_{j\in [m]}p_j\sum_{a\in {\mathbf{A}}_{ij}}\sum_{s\in [n_{j}]}\exp{t_{js}}\cdot {\bm{x}}_{a,js}{\bm{x}}^{T}_{a,js}\\
&=\sum_{j\in [m]}p_j\sum_{a\in {\mathbf{A}}_{ij}}[{\bm{x}}_{a,j1},\ldots,{\bm{x}}_{a,jn_j}]\begin{pmatrix}
  \exp{{t_{j1}}}                                                \
            &        & \text{\huge{0}}   \\
         &  \ddots               &                      \\
          \text{\huge{0}} &        &            
                                         \exp{t_{jn_j}}
\end{pmatrix} \begin{pmatrix}
  {\bm{x}}_{a,j1}^T                                                            
         \\:\\:                 \\    
          {\bm{x}}_{a,jn_j}^T
\end{pmatrix}. 
\end{align*}

\medskip
\noindent
Observe that $X_i$ is self-adjoint and positive semi-definite.
Taking the logarithm of \eqref{eq:cap}, we obtain 
\[f(\bm{t},R_1,\ldots,R_{m})=-\log\mathrm{Cap}(\mathbf{V}, \mathbf{p};(Y_j)_{j\in[m]}).\] 
\medskip
Thus, we obtain the following proposition.

\noindent
\begin{proposition}\label{prop:bdd}
The function $f$ is bounded above if and only if $(\mathbf{V}, \mathbf{p})$ is feasible.
\end{proposition}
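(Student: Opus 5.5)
The plan is to show that the map $(\bm{t},R_1,\ldots,R_m)\mapsto (Y_j)_{j\in[m]}$ is onto $\prod_{j}\mathsf{PD}_{n_j}$, and that $f$ equals $-\log$ of the capacity functional. Once both are in place, Proposition~\ref{prop:bdd} follows from the definition \eqref{eq:capp} of the capacity as an infimum.

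First I would check the identity $f(\bm{t},R_1,\ldots,R_m)=-\log\mathrm{Cap}(\mathbf{V},\mathbf{p};(Y_j)_{j\in[m]})$ with the conventions $\log 0=-\infty$ and $-\log 0=+\infty$.

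\begin{itemize}
\item \emph{Numerator.} Since $V_a^TR_j=[\bm{x}_{a,j1},\ldots,\bm{x}_{a,jn_j}]$ and $Y_j=R_j\,\mathrm{diag}(e^{t_{j1}},\ldots,e^{t_{jn_j}})\,R_j^T$, the displayed formula for $X_i$ is exactly $\sum_j p_j\sum_{a\in\mathbf{A}_{ij}}V_a^TY_jV_a$. This is the matrix inside the numerator of \eqref{eq:cap}.
\item \emph{Denominator.} We have $\log\det Y_j=\sum_{s}t_{js}$. Hence $\sum_j p_j\log\det Y_j=\langle\bm{\gamma},\bm{t}\rangle$, which is the logarithm of the denominator of \eqref{eq:cap}, reading the exponent there as $p_j$.
\end{itemize}

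By the spectral theorem, every tuple $(Y_j)\in\prod_j\mathsf{PD}_{n_j}$ arises this way. Conversely, every $(\bm{t},R_1,\ldots,R_m)$ produces such a tuple. Therefore
\[
\sup f=-\log\inf_{(Y_j)}\mathrm{Cap}(\mathbf{V},\mathbf{p};(Y_j))=-\log\mathrm{Cap}(\mathbf{V},\mathbf{p}).
\]

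Both directions of the equivalence then follow.

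\begin{itemize}
\item \emph{Feasible implies bounded.} If $(\mathbf{V},\mathbf{p})$ is feasible, then $\mathrm{Cap}(\mathbf{V},\mathbf{p})=c>0$. So $f\le-\log c<\infty$ everywhere, and $f$ is bounded above.
\item \emph{Bounded implies feasible.} If $f\le M$ everywhere, then $\mathrm{Cap}(\mathbf{V},\mathbf{p};(Y_j))\ge e^{-M}$ for all $(Y_j)$. Taking the infimum gives capacity at least $e^{-M}>0$.
\end{itemize}

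The only point needing care is the case where some $X_i$ is singular for a particular $(Y_j)$. Then $\det X_i=0$, so $f=+\infty$ at that point, which is consistent with unboundedness since the capacity is then $0$. If one insists on $f$ being real-valued, note that in the feasible case no $X_i$ can be singular. Indeed, a singular $X_i$ would make the capacity value $0$ for that $(Y_j)$, contradicting positivity of the infimum. In the infeasible case, "not bounded above" includes the value $+\infty$, or is approached by nearby points where $f$ is finite but arbitrarily large.

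I expect no serious obstacle. The main point to handle cleanly is this degenerate case, together with fixing the typo in the exponent of the denominator of \eqref{eq:cap}: the product should run over $j\in[m]$ with exponents $p_j$.
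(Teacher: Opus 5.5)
Your proposal is correct and follows the paper's own argument: the paper observes that $f(\bm{t},R_1,\ldots,R_m)=-\log\mathrm{Cap}(\mathbf{V},\mathbf{p};(Y_j)_{j\in[m]})$ and that the spectral parametrization covers all of $\prod_j\mathsf{PD}_{n_j}$, and you spell out the same steps in more detail (including reading the exponent in \eqref{eq:cap} as $p_j$ over $j\in[m]$). Your degenerate-case remark can be tightened by noting that $\ker X_i=\bigcap_{j}\bigcap_{a\in\mathbf{A}_{ij}}\ker V_a$ does not depend on $(Y_j)$, so either some $X_i$ is singular for every choice (and $f\equiv+\infty$ under your convention) or every $X_i$ is always invertible and $f$ is real-valued; there are no ``nearby'' finite points in the first case.
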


From Proposition \ref{prop:bdd} we may assume that the function $f$ defined earlier is bounded above. In particular, every $X_i$ is always positive definite. This means that there exists an invertible matrix $M_i$ such that 
\[M_i^TM_i=X_i^{-1}\]for each $i\in[k]$.

\medskip
\subsection{Basic properties of the function $f$}

We use the following four lemmas to prove Theorem \ref{theorem:seq}. The idea of the proof is similar to \cite{DH} (see also \cite{DSW}, \cite{Bar}), which considers a sequence approaching the extreme point of the function.

\medskip
\noindent
\begin{lemma}\label{lemma:max}
 For any ${\bm{t}}\in \mathbb{R}^{N}$, there exist orthogonal triples $(R^*_1(\bm{t}),\ldots,R^*_m(\bm{t}))$ such that 
 \begin{equation}\label{maxf}
     f(\bm{t},R^*_1({\bm{t}}),\ldots,R^*_m({\bm{t}}))=\max_{R_1,\ldots,R_{m}} f(\bm{t},R_1,\ldots,R_{m})
 \end{equation} and for each $j\in[m]$ and $s,s'\in [n_{j}]$, $s\neq s'$ with $t_{js}=t_{js'}$, 
 \begin{equation}\label{niseorth}
     \sum^{k}_{i=1}\sum_{a\in {\mathbf{A}}_{ij}}\langle{M_i{\bm{x}}_{a,js}},{M_i{\bm{x}}_{a,js'}}\rangle=0
 \end{equation}where $[{\bm{x}}_{a,i1},\ldots,{\bm{x}}_{a,jn_j}]=V_a^TR^*_i({\bm{t}})$.
\end{lemma}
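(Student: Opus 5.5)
Existence of a maximiser is a compactness statement, and the orthogonality condition \eqref{niseorth} is a first-order condition for rotations inside eigenspaces of $\mathrm{diag}(e^{t_{j1}},\dots,e^{t_{jn_j}})$. The argument has three steps.

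\emph{Step 1: existence of a maximiser.} Fix $\bm t$. The map $(R_1,\ldots,R_m)\mapsto f(\bm t,R_1,\ldots,R_m)$ is continuous on the compact group $\prod_j O_{n_j}(\mathbb{R})$. Indeed, by Proposition \ref{prop:bdd} and feasibility, each $X_i$ is positive definite for every choice of $R_j$, and $\log\det$ is continuous on $\mathsf{PD}_{d_i}$. Hence the maximum in \eqref{maxf} is attained.

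\emph{Step 2: reparametrise inside eigenspaces.} Let $(R_1,\ldots,R_m)$ be any maximiser and let $D_j=\mathrm{diag}(e^{t_{j1}},\ldots,e^{t_{jn_j}})$. Group the indices $s\in[n_j]$ into blocks on which $t_{js}$ is constant. For any block-diagonal orthogonal $Q_j$ adapted to these blocks we have $Q_jD_jQ_j^T=D_j$. Therefore $R_jQ_j$ yields the same $Y_j=R_jD_jR_j^T$, hence the same $X_i$ and the same value of $f$, so $(R_jQ_j)_j$ is again a maximiser.

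The task is thus to choose, for each $j$, the $Q_j$ that diagonalises the Gram form on each block. Fix a block $S\subset[n_j]$ and define the symmetric matrix
\[
G^{(j)}_{ss'}:=\sum_{i=1}^k\sum_{a\in\mathbf{A}_{ij}}\langle M_i\bm x_{a,js},M_i\bm x_{a,js'}\rangle,\qquad s,s'\in S .
\]
Replacing $R_j$ by $R_jQ_j$ replaces the vectors $\bm x_{a,js}$, $s\in S$, by the orthogonal combinations $\sum_{s''}(Q_j)_{s''s}\bm x_{a,js''}$. The $M_i$ are unchanged, since the $X_i$ are unchanged. Consequently $G^{(j)}$ transforms to $Q_j|_S^T\,G^{(j)}\,Q_j|_S$.

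\emph{Step 3: diagonalise.} By the spectral theorem there is an orthogonal $Q_j|_S$ making this matrix diagonal, which is exactly \eqref{niseorth} for $s\ne s'$ in $S$. Doing this independently for every block and every $j$ gives $R_j^*(\bm t):=R_jQ_j$, and these satisfy both \eqref{maxf} and \eqref{niseorth}.

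The only point needing care, and the one I expect to be the main obstacle, is that $M_i$ is defined through $X_i$, which depends on $R$. One must check that the block rotations leave $X_i$ invariant, so that $M_i$ may be held fixed while rotating. This holds because $X_i=\sum_j p_j\sum_a V_a^T Y_j V_a$ depends on $R_j$ only through $Y_j$.

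Notably, maximality is not actually used for \eqref{niseorth}. It is obtained purely by choosing a good representative in the fibre of rotations that fix $\bm t$, so no first-order variational computation is required.
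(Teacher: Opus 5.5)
Your proposal is correct and is essentially the paper's own argument. Compactness of $\prod_j O_{n_j}(\mathbb{R})$ gives a maximiser. Right-multiplying each $R_j$ by an orthogonal matrix that is block-diagonal on the level sets $J_r$ of $s\mapsto t_{js}$ leaves $Y_j$, hence $X_i$, $M_i$ and $f$, unchanged. The blocks $Q_r$ can therefore be chosen via the spectral theorem to diagonalise $\sum_{i}\sum_{a}(M_iL_{a,J_r})^T(M_iL_{a,J_r})$. Your extra remarks are accurate and a bit more explicit than the paper: continuity of $f$ follows from positive definiteness of every $X_i$ under feasibility, and maximality is not used for the orthogonality condition.
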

\medskip
\begin{proof}

First, there are some orthogonal triples $(R^*_1({\bm{t}}),\ldots,R^*_m({\bm{t}}))$ satisfying \eqref{maxf} by the compactness of $\prod^m_{j=1}O_{n_j}(\mathbb{R})$.

Fix $j\in[m]$. We partition the indices of $(t_{j1},\ldots,t_{jn_j})$ into equivalence classes $J_1,\ldots,J_b\subset [n_j]$ such that for $s$, $s'$ in the same classes  $t_{js}=t_{js'}$ and for different classes $t_{js}\neq t_{js'}$. We use $t_{S_r}$ to denote the value of $t_{js}$ for $s\in J_r$, and $L_{a,J_r}$ to denote the matrix consisting of all columns $\bm{x}_{a,js}$ with $s\in J_r$. Since the terms in $X_i$ that depend on $R_j$ are \[\sum_{a\in{\mathbf{A}}_{ij}}\sum_{r\in [b]}\Big(p_j\exp{t_{J_r}}\sum_{s\in J_r}{\bm{x}_{a,js}{\bm{x}}^T_{a,js}}\Big)=\sum_{a\in{\mathbf{A}}_{ij}}\sum_{r\in [b]}\big(p_j\exp{t_{J_r}}\cdot L_{a,J_r}L^T_{a,J_r}\big)\] \[=\sum_{a\in{\mathbf{A}}_{ij}}\sum_{r\in [b]}\big(p_j\exp{t_{J_r}}\cdot L_{a,J_r}Q_rQ^T_rL^T_{a,J_r}\big),\] where $Q_r\in O_{J_r}(\mathbb{R})$ is independent of $a\in{\mathbf{A}}_{ij}$ and $i\in[k]$, we can replace $R^*_i({\bm{t}})$ with $R^*_i({\bm{t}})\mathrm{diag}(Q_1,\ldots,Q_b)$ without changing the value of $X_i$, $M_i$ and $f$.

For each $r\in[b]$, $\sum^{k}_{i=1}\sum_{a\in {\mathbf{A}}_{ij}}(M_iL_{a,J_r})^T(M_iL_{a,J_r})$ is a real self-adjoint matrix, so there exists a $|J_r|\times|J_r|$ orthogonal matrix $Q_r$ such that  
\begin{align*}
    Q_r^T\Big(\sum^{k}_{i=1}\sum_{a\in {\mathbf{A}}_{ij}}(&M_iL_{a,J_r})^T(M_iL_{a,J_r})\Big)Q_r\\&=\sum^{k}_{i=1}\sum_{a\in {\mathbf{A}}_{ij}}(M_iL_{a,J_r}Q_r)^T(M_iL_{a,J_r}Q_r)\\
    &=\Big(\sum^{k}_{i=1}\sum_{a\in {\mathbf{A}}_{ij}}\langle{M_i\tilde{\bm{x}}_{a,js}},{M_i\tilde{\bm{x}}_{a,js'}}\rangle\Big)_{s,s'}\in M_{|J_r|\times |J_r|}(\mathbb{R})
    \end{align*}
    is diagonal, where \medskip\[[\tilde{\bm{x}}_{a,j1},\ldots,\tilde{\bm{x}}_{a,j|J_r|}]=L_{a,J_r}Q_r.\]

\medskip
\noindent
Thus, by  replacing $R^*_j({\bm{t}})$ with $R^*_j({\bm{t}})\mathrm{diag}(Q_1,\ldots,Q_b)$ (here $R^*_j({\bm{t}})\mathrm{diag}(Q_1,\ldots,Q_b)$ denotes the matrix in which the submatrix with row and column indices $J_r$ is $Q_r$), we obtain $R^*_j({\bm{t}})$ satisfying \eqref{niseorth}.

Doing this for all $j$, we obtain $(R^*_1({\bm{t}}),\ldots,R^*_m({\bm{t}}))$ which satisfies both conditions of Lemma \ref{lemma:max}.
\end{proof}

\medskip
\noindent
\begin{lemma}\label{lemma:epsilon}
  For each $\varepsilon>0$, there exists some ${\bm{t}}^*\in \mathbb{R}^{N}$ such that  \[\Big| \frac{d}{dt_{js}}f(\bm{t}^*,R^*_1({\bm{t}}^*),\ldots,R^*_m({\bm{t}}^*)) \Big|< \varepsilon\]for all $t^*_{js}$.
\end{lemma}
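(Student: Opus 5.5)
Set $F(\bm t):=\max_{R_1,\ldots,R_m} f(\bm t,R_1,\ldots,R_m)=f(\bm t,R^*_1(\bm t),\ldots,R^*_m(\bm t))$. By Proposition \ref{prop:bdd}, $F$ is bounded above on $\mathbb{R}^N$. The plan is to find a point where $F$ is nearly maximal and to read off the smallness of the partial derivatives there from a one-sided Taylor estimate, as in the argument of Dvir--Hu.

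Here $\frac{d}{dt_{js}}f(\bm t^*,R^*_1(\bm t^*),\ldots)$ means the partial derivative of $f$ in $t_{js}$ with the orthogonal matrices frozen at $R^*(\bm t^*)$. Writing $g(\bm t):=f(\bm t,R^*(\bm t^*))$, we have
\[
\partial_{js} g(\bm t)=p_j-p_j\sum_{i}\sum_{a\in\mathbf{A}_{ij}}e^{t_{js}}\,\bm x_{a,js}^TX_i^{-1}\bm x_{a,js}.
\]
The first step is a uniform bound on the second derivatives of $g$ in each coordinate direction. Fix $(j,s)$ and set $c:=\sum_{i}\sum_{a\in\mathbf{A}_{ij}} p_je^{t_{js}}\bm x_{a,js}^TX_i^{-1}\bm x_{a,js}$. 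Since $X_i\succeq p_je^{t_{js}}\bm x_{a,js}\bm x_{a,js}^T$, each summand lies in $[0,1]$, and so $0\le c\le \sum_i\#\mathbf{A}_{ij}$. Differentiating $X_i^{-1}$ in $t_{js}$ gives
\[
\partial^2_{js} g=-c+\sum_i\operatorname{tr}\bigl((X_i^{-1}\partial_{js}X_i)^2\bigr),
\]
and each trace term is also at most a constant depending only on the $\#\mathbf{A}_{ij}$. So $|\partial_{js}^2 g|\le K$ for a constant $K$ independent of $\bm t$ and of the frozen rotations.

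Next, let $\sup F=S<\infty$ and pick $\bm t^*$ with $F(\bm t^*)>S-\delta$. Suppose $|\partial_{js}g(\bm t^*)|\ge\varepsilon$ for some $(j,s)$. Move $\bm t^*$ by $h=\pm\varepsilon/K$ in the direction of $e_{js}$, choosing the sign to increase $g$. Taylor's theorem with the curvature bound gives
\[
g(\bm t^*+he_{js})\ge g(\bm t^*)+\tfrac{\varepsilon^2}{2K}.
\]
Since $F\ge g$ everywhere and $g(\bm t^*)=F(\bm t^*)$, this yields $F(\bm t^*+he_{js})>S-\delta+\varepsilon^2/(2K)$. Choosing $\delta<\varepsilon^2/(2K)$ contradicts $\sup F=S$. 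Hence every partial derivative at $\bm t^*$ has absolute value less than $\varepsilon$.

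The main obstacle is the uniform second-derivative bound, namely controlling $\operatorname{tr}((X_i^{-1}\partial X_i)^2)$ independently of $\bm t$. This holds because $X_i^{-1/2}(\partial_{js}X_i)X_i^{-1/2}$ is a sum of PSD rank-one terms, each dominated by the identity. Its eigenvalues are therefore bounded by $\#\mathbf{A}_{ij}$, which is what makes the bound on $K$ work. No differentiability of $R^*$ in $\bm t$ is needed, since the rotations stay frozen throughout; the choice made in Lemma \ref{lemma:max} is irrelevant here and is only needed later.
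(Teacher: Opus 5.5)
Your proof is correct, but it takes a different route from the paper's. The paper gets Lemma~\ref{lemma:epsilon} in one line from the abstract Lemma~\ref{lemma:gen}, which it quotes from Dvir--Hu, applied with $F=f$, $A=\prod_j O_{n_j}(\mathbb{R})$ and $y^*=R^*$. The only inputs are that $f$ is bounded above (Proposition~\ref{prop:bdd}), continuous, maximised in $R$ by $R^*(\bm t)$, and differentiable in $\bm t$ for frozen $R$; the abstract lemma itself is cited, not proved.

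You instead prove the needed special case directly. The log-det structure gives a uniform bound on $\partial_{js}^2 f$ along coordinate lines, and a single ascent step at a near-maximiser of $F=\max_R f$ finishes the argument. This makes the proof self-contained and quantitative: if $F(\bm t^*)>\sup F-\delta$, then every partial derivative at $\bm t^*$ is below $\sqrt{2K\delta}$, whichever maximiser $R^*(\bm t^*)$ is chosen. In fact any pair $(\bm t,R)$ with $f(\bm t,R)>\sup f-\delta$ would do, so $\bm t^*$ can be read off from any minimising sequence for $\mathrm{Cap}(\mathbf{V},\mathbf{p};\cdot)$. The paper's lemma buys generality instead: it needs no second-order control at all, and in that generality a single Taylor step is not available.

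One simplification: your Taylor estimate only uses a lower bound on $\partial_{js}^2 g$. Since $X_i^{-1}\partial_{js}X_i$ is similar to the positive semidefinite matrix $X_i^{-1/2}(\partial_{js}X_i)X_i^{-1/2}$, the term $\operatorname{tr}((X_i^{-1}\partial_{js}X_i)^2)$ is nonnegative. So $\partial_{js}^2 g\ge -c\ge -\sum_i\#\mathbf{A}_{ij}$ already suffices, taking $K\ge 1$ so the step $\varepsilon/K$ is defined. The bound on the trace term, which you single out as the main obstacle, is therefore not needed.
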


\medskip
\noindent
Lemma \ref{lemma:epsilon} follows immediately from the following lemma which is introduced in \cite[Lemma 3.4]{DH}.

\medskip
\noindent
\begin{lemma} \label{lemma:gen}
 Let $A\subset {\mathbb{R}}^d$ be a compact set. Let $F:{\mathbb{R}}^m \times A \to \mathbb{R}$ and $y^*:{\mathbb{R}}^m\to A$ be functions satisfying the following properties:

\quad 1. $F(\bm{x},y)$ is bounded above and continuous on ${\mathbb{R}}^m \times A$.

\quad 2. For every $\bm{x}\in {\mathbb{R}}^m$, $F(\bm{x},y^*(\bm{x}))=\max_{y\in A}F(\bm{x},y)$.

\quad 3. For every $y\in A$, $F(\bm{x},y)$ as a function of $\bm{x}$ is differentiable on ${\mathbb{R}}^m$.

\noindent
Then for each $\varepsilon>0$, there exists an $\bm{x}^*\in \mathbb{R}^{m}$ such that  \[\Big| \frac{d}{dx_{i}}F(\bm{x}^*,y^*(\bm{x}^*) \Big|< \varepsilon\]for every $i\in [m]$.
\end{lemma}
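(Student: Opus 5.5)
The plan is to argue by contradiction. The argument is one-dimensional along a well-chosen direction, in the spirit of a ``no critical-ish point implies unbounded'' principle.

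Set $g(\bm{x}):=\max_{y\in A}F(\bm{x},y)=F(\bm{x},y^*(\bm{x}))$. By property 1, $g$ is bounded above; since $A$ is compact and $F$ is continuous, $g$ is also continuous and finite. Suppose the conclusion fails for some $\varepsilon>0$. Then at every $\bm{x}$ some partial derivative $\partial_{x_i}F(\cdot,y^*(\bm{x}))$ at $\bm{x}$ has absolute value at least $\varepsilon$.

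The key observation is a one-sided comparison. For any direction $\bm{v}$ and $h>0$,
\[g(\bm{x}+h\bm{v})\ \ge\ F(\bm{x}+h\bm{v},y^*(\bm{x})).\]
By property 3,
\[F(\bm{x}+h\bm{v},y^*(\bm{x}))=g(\bm{x})+h\,\langle \nabla_{\bm{x}}F(\bm{x},y^*(\bm{x})),\bm{v}\rangle+o(h).\]
Hence the lower Dini derivative of $g$ at $\bm{x}$ in direction $\bm{v}$ is at least $\langle \nabla_{\bm{x}} F(\bm{x},y^*(\bm{x})),\bm{v}\rangle$. Choosing $\bm{v}=\pm e_i$ with the sign of the large partial derivative gives a coordinate direction in which $g$ increases at rate at least $\varepsilon$, infinitesimally.

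This alone does not yet give unboundedness, because the good direction may change from point to point. The step I expect to be the main obstacle is therefore upgrading this pointwise infinitesimal growth to a global contradiction with the upper bound on $g$.

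My first attempt is to consider $\phi(\bm{x}):=g(\bm{x})-\delta|\bm{x}|^2$ with a small $\delta>0$. Since $g$ is bounded above and continuous, $\phi\to-\infty$ at infinity, so $\phi$ attains a maximum at some $\bm{x}_\delta$. At this maximiser, every directional lower Dini derivative of $\phi$ is $\le 0$. That gives
\[\langle\nabla_{\bm{x}}F(\bm{x}_\delta,y^*(\bm{x}_\delta)),\bm{v}\rangle\le 2\delta\langle \bm{x}_\delta,\bm{v}\rangle\]
for all $\bm{v}$, and hence $\nabla_{\bm{x}}F(\bm{x}_\delta,y^*(\bm{x}_\delta))=2\delta\bm{x}_\delta$. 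So it suffices to arrange $2\delta|\bm{x}_\delta|<\varepsilon$.

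This requires controlling $\delta|\bm{x}_\delta|$, and I would do it by comparing with $\bm{0}$. From $\phi(\bm{x}_\delta)\ge\phi(\bm{0})$ we get $\delta|\bm{x}_\delta|^2\le \sup g-g(\bm{0})=:C$, so $\delta|\bm{x}_\delta|\le\sqrt{C\delta}$. Taking $\delta<\varepsilon^2/(4C)$ then yields $|\partial_{x_i}F(\bm{x}^*,y^*(\bm{x}^*))|<\varepsilon$ for every $i$, with $\bm{x}^*=\bm{x}_\delta$. This is the contradiction, and in fact it gives the statement directly.

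A subtlety to verify is that the argument does not need differentiability of $g$, only the one-sided inequality above, and that it does not need continuity of $y^*$. If $C=0$, the point $\bm{0}$ is already a global maximiser of $g$, and the same first-order argument gives a zero gradient there.
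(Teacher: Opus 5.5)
Your argument is correct. The paper gives no proof of this lemma: it quotes it from Dvir--Hu (their Lemma 3.4) and uses it as a black box to obtain Lemma \ref{lemma:epsilon}. So your proposal is a self-contained replacement, not a variant of an in-paper argument.

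The heart of your proof is the penalisation. Since $g$ is continuous and bounded above, $g-\delta|\bm{x}|^2$ attains its maximum at some $\bm{x}_\delta$, and comparing with $\bm{0}$ gives $\delta|\bm{x}_\delta|^2\le C$. The first-order step can be phrased more simply than with Dini derivatives. Because $F(\cdot,y^*(\bm{x}_\delta))\le g$ with equality at $\bm{x}_\delta$, the function $F(\cdot,y^*(\bm{x}_\delta))-\delta|\cdot|^2$ also has a global maximum at $\bm{x}_\delta$. Its partial derivatives therefore vanish there, i.e.\ $\partial_{x_i}F(\bm{x}_\delta,y^*(\bm{x}_\delta))=2\delta(\bm{x}_\delta)_i$, and hence $|\partial_{x_i}F|\le 2\sqrt{C\delta}<\varepsilon$. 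The opening contradiction hypothesis and the paragraph on directional growth are never used and can be dropped.

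Compared with simply citing Dvir--Hu, your route buys several things:
\begin{itemize}
\item It needs only the existence of each partial derivative of $F(\cdot,y)$, since you can restrict to the lines $\bm{x}_\delta+h e_i$.
\item It needs no regularity of the selection $y^*$. This matters: in the application, $y^*(\bm{t})=(R^*_j(\bm{t}))_j$ comes from Lemma \ref{lemma:max}, with no continuity.
\item It is quantitative. It gives the exact identity $\nabla_{\bm{x}}F(\bm{x}^*,y^*(\bm{x}^*))=2\delta\bm{x}^*$ and the localisation $|\bm{x}^*|\le\sqrt{C/\delta}$, roughly $2C/\varepsilon$, where $C=\sup g-g(\bm{0})$.
\end{itemize}
In effect this is a finite-dimensional smooth variational principle of Ekeland type. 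You read $\frac{d}{dx_i}F(\bm{x}^*,y^*(\bm{x}^*))$ as the partial derivative with $y=y^*(\bm{x}^*)$ frozen; that is exactly the reading the paper needs when it computes $\varepsilon_{js}$ in the proof of Theorem \ref{theorem:seq}.
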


\medskip
\noindent
Finally, we prove the following lemma.

\medskip
\noindent
\begin{lemma}\label{lemma:orth}
  Choose ${\bm{t}}^*\in \mathbb{R}^{N}$ and $(R^*_1({\bm{t}}^*),\ldots,R^*_m({\bm{t}}^*))$ that satisfy both conditions of Lemma \ref{lemma:max} and Lemma \ref{lemma:epsilon} in the case of $\varepsilon=\frac{\varepsilon\cdot \min_{j\in[m]}p_j}{N}$. Then for each $j\in[m]$ and $s,s'\in[n_j]$, $s\neq s'$, \[\sum^{k}_{i=1}\sum_{a\in {\mathbf{A}}_{ij}}\langle{M_i{\bm{x}}_{a,js}},{M_i{\bm{x}}_{a,js'}}\rangle=0\]where $[{\bm{x}}_{a,j1},\ldots,{\bm{x}}_{a,jn_j}]=V_a^TR^*_j({\bm{t}}^*)$.
\end{lemma}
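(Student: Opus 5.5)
The plan is to use only the first-order optimality of $(R^*_1(\bm{t}^*),\ldots,R^*_m(\bm{t}^*))$ in the orthogonal variables. The near-stationarity in $\bm{t}$ from Lemma \ref{lemma:epsilon} should play no role here. I split into two cases according to whether $t^*_{js}=t^*_{js'}$ or not.

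\emph{Case $t^*_{js}=t^*_{js'}$.} This is exactly \eqref{niseorth} in Lemma \ref{lemma:max}, so nothing further is needed.

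\emph{Case $t^*_{js}\neq t^*_{js'}$.} Fix $j$ and such $s\neq s'$. Let $G(\theta)\in O_{n_j}(\mathbb{R})$ be the Givens rotation in the $(s,s')$-plane, and consider the curve
\[
\theta\mapsto f\bigl(\bm{t}^*,R^*_1(\bm{t}^*),\ldots,R^*_j(\bm{t}^*)G(\theta),\ldots,R^*_m(\bm{t}^*)\bigr).
\]
This curve is smooth, and by \eqref{maxf} it attains its maximum at $\theta=0$, so its derivative there vanishes.

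Under this rotation, for every $a\in\mathbf{A}_{ij}$ the columns transform as
\[
\bm{x}_{a,js}\mapsto \cos\theta\,\bm{x}_{a,js}+\sin\theta\,\bm{x}_{a,js'},\qquad
\bm{x}_{a,js'}\mapsto -\sin\theta\,\bm{x}_{a,js}+\cos\theta\,\bm{x}_{a,js'}.
\]
All other columns are unchanged, and the linear term $\langle\bm{\gamma},\bm{t}\rangle$ does not depend on $\theta$. Differentiating the two affected summands of $X_i$ at $\theta=0$ gives
\[
\frac{d}{d\theta}\Big|_{\theta=0}X_i
= p_j\bigl(e^{t^*_{js}}-e^{t^*_{js'}}\bigr)\sum_{a\in\mathbf{A}_{ij}}\bigl(\bm{x}_{a,js}\bm{x}_{a,js'}^T+\bm{x}_{a,js'}\bm{x}_{a,js}^T\bigr).
\]

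Next I use Jacobi's formula $\frac{d}{d\theta}\log\det X_i=\operatorname{tr}\bigl(X_i^{-1}\tfrac{d}{d\theta}X_i\bigr)$. This is valid because each $X_i$ is positive definite by the feasibility assumption, via Proposition \ref{prop:bdd}. Together with $X_i^{-1}=M_i^TM_i$, it yields
\[
0=\frac{d}{d\theta}\Big|_{\theta=0} f
=-2p_j\bigl(e^{t^*_{js}}-e^{t^*_{js'}}\bigr)\sum_{i=1}^k\sum_{a\in\mathbf{A}_{ij}}\langle M_i\bm{x}_{a,js},M_i\bm{x}_{a,js'}\rangle .
\]
Since $p_j>0$ and $e^{t^*_{js}}\neq e^{t^*_{js'}}$, the double sum must vanish, which is the claim.

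The only delicate point is the derivative computation. One has to check that the cross terms combine into the symmetric expression above, with the sign factor $e^{t_{js}}-e^{t_{js'}}$, and that $\operatorname{tr}\bigl(X_i^{-1}(xy^T+yx^T)\bigr)=2\langle M_ix,M_iy\rangle$. This is routine. The real point of the argument is that the non-degenerate case comes for free from maximality, while the degenerate case is exactly what Lemma \ref{lemma:max} was designed to handle.
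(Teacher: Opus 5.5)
Your proposal is correct and is essentially the paper's own proof: the paper also disposes of the case $t^*_{js}=t^*_{js'}$ via Lemma~\ref{lemma:max}, composes $R^*_{j}(\bm{t}^*)$ with a Givens rotation in the $(s,s')$-plane, uses maximality to get a vanishing derivative at $\theta=0$, and applies Jacobi's formula with $X_i^{-1}=M_i^TM_i$ to obtain $2p_{j}\bigl(e^{t^*_{js}}-e^{t^*_{js'}}\bigr)\sum_{i}\sum_{a\in\mathbf{A}_{ij}}\langle M_i\bm{x}_{a,js},M_i\bm{x}_{a,js'}\rangle=0$. Your overall sign is opposite to the paper's only because your rotation is the transpose of the paper's, which is immaterial, and you are right that Lemma~\ref{lemma:epsilon} plays no role in this step.
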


\begin{proof}

Fix $j_0\in[m]$. By Lemma \ref{lemma:max}, it is enough to consider the case $s\neq s'$, $t_{js}\neq t_{js'}$. Choose such $s$, $s'$ and denote $s_0<s'_0$.

Let $h:\mathbb{R}\to \mathbb{R}$ be the function \[h(\theta)=\langle\bm{\gamma},{\bm{t}}\rangle-\sum^k_{i=1}\log \det\Big(\sum_{j\in [m]}p_j\sum_{a\in {\mathbf{A}}_{ij}}\sum_{s\in [n_{j}]}\exp{t_{js}}\cdot {\bm{x}}'_{a,js}{\bm{x}}'^{T}_{a,js}\Big)\] where
\medskip
\[[{\bm{x}}'_{a,i1},\ldots,{\bm{x}}'_{a,jn_j}]=[{\bm{x}}_{a,j1},\ldots,{\bm{x}}_{a,jn_j}]R_j(\theta),\quad R_j(\theta)=I_{n_j}\quad (j\neq j_0), \]\[R_{j_0}(\theta)=\begin{pmatrix}
  1                                                \\
    & \ddots                                       \\
    &        & \cos{\theta}      & \dots &  \sin{\theta}                  \\
    &        & \vdots &       & \vdots             \\
    &        &  -\sin{\theta}      & \dots &  \cos{\theta}                  \\
    &        &        &       &       & \ddots     \\
    &        &        &       &       &        & 1
\end{pmatrix}\] For clarification, $R_{i_0}\in O_{n_{i_0}}(\mathbb{R})$ is  obtained from the identity matrix by changing the $(s_0,s_0)$, $(s_0',s_0')$ entries to $\cos{\theta}$, the $(s_0,s_0')$ entry to $\sin{\theta}$, and the $(s_0',s_0)$ entry to $-\sin{\theta}$. Then
\medskip
\begin{align*}
h(\theta)&=f(\bm{t},R^*_1({\bm{t}}^*),..,R^*_{j_0}({\bm{t}}^*)R_{j_0}(\theta),..,R^*_m({\bm{t}}^*))\\
&\le f(\bm{t},R^*_1({\bm{t}}^*),..,R^*_{j_0}({\bm{t}}^*),..,R^*_m({\bm{t}}^*))\\
&=h(0) 
\end{align*}

\medskip
\noindent
for all $\theta\in\mathbb{R}$. Thus, $h(\theta)$ has the maximum at $\theta=0$. Using $\frac{d}{ds}\log\det A=\operatorname{tr}(A^{-1}\frac{d}{ds}A)$ for the invertible matrix $A$ (see, for example, \cite[Chapter 9 ,Theorem 4]{Lax}) we can calculate as follows:

\begin{align*}
0&=\frac{dh}{d\theta}(0)\\
&=-\sum^{k}_{i=1}\operatorname{tr} \Bigg[X_i^{-1} \sum_{a\in {\mathbf{A}}_{ij_0}}\Big(p_{j_0}\exp{t^*_{j_0 s_0'}} \left.\frac{d}{d\theta}\right|_{\theta=0}{\bm{x}}'_{a,j_0 s_0} {\bm{x}}_{a,j_0 s_0}^{\prime T}\\
&\qquad\qquad\quad\qquad\qquad\quad\qquad\qquad\quad + p_{j_0}\exp{t^*_{j_0 s_0'}} \left.\frac{d}{d\theta}\right|_{\theta=0}{\bm{x}}'_{a,j_0 s_0'} {\bm{x}}_{a,j_0 s_0'}^{\prime T}\Big)\Bigg] \\
&= - p_{j_0}\exp{t^*_{j_0 s_0}} \sum^{k}_{i=1}\sum_{a\in {\mathbf{A}}_{ij_0}}\operatorname{tr} \Bigg[\left.\frac{d}{d\theta}\right|_{\theta=0}(\cos\theta \, M_i{\bm{x}}_{a,j_0 s_0} - \sin\theta \, M_i{\bm{x}}_{a,j_0 s_0'})\\
&\qquad\qquad\quad\qquad\quad\qquad\qquad\quad\quad\qquad\quad\qquad\cdot(\cos\theta \, M_i{\bm{x}}_{a,j_0 s_0} - \sin\theta \, M_i{\bm{x}}_{a,j_0 s_0'})^T\Bigg] \\
&\qquad - p_{j_0}\exp{t^*_{j_0 s_0'}} \sum^{k}_{i=1}\sum_{a\in {\mathbf{A}}_{ij_0}}\operatorname{tr} \Bigg[\left.\frac{d}{d\theta}\right|_{\theta=0}(\sin\theta \, M_i{\bm{x}}_{a,j_0 s_0} + \cos\theta \, M_i{\bm{x}}_{a,j_0 s_0'})\\
&\qquad\qquad\qquad\quad\qquad\qquad\quad\qquad\quad\qquad\qquad\cdot(\sin\theta \, M_i{\bm{x}}_{a,j_0 s_0} + \cos\theta \, M_i{\bm{x}}_{a,j_0 s_0'})^T\Bigg] \\
 &= - p_{j_0}\exp{t^*_{j_0 s_0}} \Big( -2 \sum^{k}_{i=1}\sum_{a\in {\mathbf{A}}_{ij_0}}\langle M_i{\bm{x}}_{a,j_0 s_0}, M_i{\bm{x}}_{a,j_0 s_0'} \rangle \Big)\\
 &\qquad\qquad\quad\qquad\quad\qquad\quad\qquad- p_{j_0}\exp{t^*_{j_0 s_0'}} \Big( 2 \sum^{k}_{i=1}\sum_{a\in {\mathbf{A}}_{ij_0}}\langle M_i{\bm{x}}_{a,j_0 s_0}, M_i{\bm{x}}_{a,j_0 s_0'} \rangle \Big) \\
 \end{align*}

\begin{align*}
 &= 2p_{j_0} ( \exp{t^*_{j_0 s_0}} - \exp{t^*_{j_0 s_0'}})\sum^{k}_{i=1}\sum_{a\in {\mathbf{A}}_{ij_0}}\langle M_i{\bm{x}}_{a,j_0 s_0}, M_i{\bm{x}}_{a,j_0 s_0'}\rangle
\end{align*}

Since $p_{j_0}>0$ and $t_{j_0s_0}\neq t_{j_0s_0'}$, we have $\sum^{k}_{i=1}\sum_{a\in {\mathbf{A}}_{ij_0}}\langle{M_i{\bm{x}}_{a,js_0}},{M_i{\bm{x}}_{a,js_0'}}\rangle=0$.

\end{proof}

\medskip
\subsection{Proof of Theorem \ref{theorem:seq}}
Using the previous lemmas, we construct a quiver datum that satisfies Theorem \ref{theorem:seq}. 
\medskip

\begin{proof}[Proof of Theorem \ref{theorem:seq}]
Choose ${\bm{t}}^*\in \mathbb{R}^{m}$ and $(R^*_1({\bm{t}}^*),\ldots,R^*_m({\bm{t}}^*))$ that satisfy Lemmas \ref{lemma:max} and \ref{lemma:epsilon} in the case of $\varepsilon=\frac{\varepsilon\cdot \min_{j\in[m]}p_j}{N}$. Set \[\mathbf{V}'=(V_a')_a,\quad V_a'=\begin{pmatrix}
  \exp{(\frac{t^*_{j1}}{2})}                                                \
            &        & \text{\huge{0}}   \\
         &  \ddots               &                      \\
          \text{\huge{0}} &        &            
                                         \exp{(\frac{t^*_{jn_j}}{2})}
\end{pmatrix}R^*_j({\bm{t}}^*)^TV_aM_i^T.\]  

\noindent
    By the construction of $M_i$ and $R^*_j({\bm{t}}^*)^T$, we can easily see the equivalence between $\mathbf{V}$ and $\mathbf{V}'$. Also, for each $i\in [k]$, \[\sum^{m}_{j=1}p_{j}\sum_{a\in {\mathbf{A}}_{ij}}(V_a')^TV_a' = M_iX_iM_i^T=id_{{\mathbb{R}}^{d_i}}, \] so $\mathbf{V}'$ satisfies \eqref{eq:proj}. 

\medskip
\noindent
    Finally, we prove \eqref{eq:adj}. For $i\in[n],j\in[n_i]$, we define \[\varepsilon_{js}=\frac{d}{dt_{js}}f(\bm{t}^*,R^*_1({\bm{t}}^*),\ldots,R^*_m({\bm{t}}^*)) \in\big(-\frac{\varepsilon\cdot \min_{j\in[m]}p_j}{N},\frac{\varepsilon\cdot \min_{j\in[m]}p_j}{N}\big).\] Then by $\frac{d}{ds}\log{\mathrm{det(A)}}=\operatorname{tr}(A^{-1}\frac{d}{ds}A)$ for invertible matrix $A$,
    \begin{align*}
    \varepsilon_{js}&=p_j-\sum^{k}_{i=1}\operatorname{tr}\Big(X_i^{-1}\sum_{a\in {\mathbf{A}}_{ij}}p_j\exp{t^*_{js}}\cdot\bm{x}_{a,js}{\bm{x}_{a,js}}^T\Big) \\
    &=p_j-p_j\exp{t^*_{js}}\cdot \sum^{k}_{i=1}\sum_{a\in {\mathbf{A}}_{ij}}\operatorname{tr}(M_i\bm{x}_{a,js}{\bm{x}_{a,js}}^TM_i^T) \\
    &=p_j-p_j\exp{t^*_{js}}\cdot\sum^{k}_{i=1}\sum_{a\in {\mathbf{A}}_{ij}}\|M_i\bm{x}_{a,js}\|^2.  
     \end{align*}
    Thus
    \begin{equation}\label{eq:norm}
        \quad \sum^{k}_{i=1}\sum_{a\in {\mathbf{A}}_{ij}}\|M_i\bm{x}_{a,js}\|^2=(1-\varepsilon_{js}{p_j}^{-1}) \exp{(-t^*_{js})}.
     \end{equation}
Now,  
\begin{align*}
    &\begin{pmatrix}
\exp{(-\frac{t^*_{j1}}{2})}                                                \
                    & \text{\huge{0}}   \\
         \quad \quad \quad\ \ddots               &                      \\
          \text{\huge{0}} &          \exp{(-\frac{t^*_{jn_j}}{2})}
\end{pmatrix}\Big(\sum^{k}_{i=1}\sum_{a\in {\mathbf{A}}_{ij}}V_a'(V_a')^T \Big) \begin{pmatrix}
  \exp{(-\frac{t^*_{j1}}{2})}                                                \
                    & \text{\huge{0}}   \\
         \quad \quad \quad\ \ddots               &                      \\
          \text{\huge{0}} &          \exp{(-\frac{t^*_{jn_j}}{2})}
\end{pmatrix}\\
&\qquad\qquad\qquad=\sum^{k}_{i=1}\sum_{a\in {\mathbf{A}}_{ij}}\big(M_iV_a^TR^*_j(\bm{t}^*)\big)^T\big(M_iV_a^TR^*_j(\bm{t}^*)\big)
\end{align*}
and using Lemma \ref{lemma:orth} and \eqref{eq:norm},
\begin{align*}\label{extrehenkei}
\sum^{k}_{i=1}\sum_{a\in {\mathbf{A}}_{ij}}\big(&M_iV_a^TR^*_j(\bm{t}^*)\big)^T\big(M_iV_a^TR^*_j(\bm{t}^*)\big)\tag{1}\\
&=\sum^{k}_{i=1}\sum_{a\in {\mathbf{A}}_{ij}}{\begin{pmatrix}
  (M_i{\bm{x}}_{a,j1})^T                                                            
         \\:\\:                 \\    
          (M_i{\bm{x}}_{a,jn_j})^T
\end{pmatrix}}\Big[M_i{\bm{x}}_{a,j1},\ldots,M_i{\bm{x}}_{a,jn_j}\Big] \\
&= \sum^{k}_{i=1}\sum_{a\in {\mathbf{A}}_{ij}}\big(\langle M_i\bm{x}_{a,js},M_i{\bm{x}_{a,js'}}\rangle \big)_{ss'} \\
&= \Big(\sum^{k}_{i=1}\sum_{a\in {\mathbf{A}}_{ij}}\langle M_i\bm{x}_{a,js},M_i{\bm{x}_{a,js'}}\rangle \Big)_{ss'} \\
&={\begin{pmatrix}
  (1-\varepsilon_{j1}{p_j}^{-1}) \exp{(-t^*_{j1})}                                               
                    & \text{\huge{0}}   \\
         \qquad\qquad\qquad\quad \quad\ \ddots               &                      \\
          \text{\huge{0}} &          (1-\varepsilon_{jn_j}{p_j}^{-1}) \exp{(-t^*_{jn_j})}
\end{pmatrix}}
\end{align*}

\medskip
\noindent
for each $j\in [m]$. Therefore, \[\sum^{k}_{i=1}\sum_{a\in {\mathbf{A}}_{ij}}V_a'(V_a')^T-id_{{\mathbb{R}}^{n_j}}=\begin{pmatrix}
  -\varepsilon_{j1}{p_j}^{-1}                                               \
                    & \text{\huge{0}}   \\
         \quad \quad \quad\ \ddots               &                      \\
          \text{\huge{0}} &          -\varepsilon_{jn_j}{p_j}^{-1}
\end{pmatrix}. \]  Since $p_j>0$ for each $j\in[m]$, we see that $\mathbf{V}'$ satisfies \eqref{eq:adj}.
\end{proof}

\section{Applications and Remarks}
\subsection{Characteristic of extremisable quiver data}

Using the calculation in the proof of Theorem \ref{theorem:seq}, we can prove that \eqref{extre1} and \eqref{extre2} are necessary for the existence of the extremiser (Theorem \ref{theorem:ext}) introduced in \cite{BCCT} and \cite{CDP}.

\begin{proof}[Proof of the existence of the extremisers in Theorem \ref{theorem:ext}]
For each $j\in[m]$, there exists a unique representation of $Y_{j}$ such that
\[Y_{j}:=R_j\begin{pmatrix}
  \exp{{t_{j1}}}                                                \
            &        & \text{\huge{0}}   \\
         &  \ddots               &                      \\
          \text{\huge{0}} &        &            
                                         \exp{t_{jn_j}}
\end{pmatrix}(R_j)^T\] where $R_j\in O_{n_j}(\mathbb{R})$ and $\bm{t}_j=(t_{j1},\ldots,t_{jn_j})\in \mathbb{R}^{n_j}$. By the feasibility of $(\mathbf{V},\mathbf{p})$, $X_i$ is invertible. Also, since the tuple $((\bm{t}_j)_{j\in[m]},R_1,\ldots,R_m)$ is the maximiser of the function $f$ defined in Section 2, \[ \frac{d}{dt_{js}}f((\bm{t}_j)_{j\in[m]},R_1,\ldots,R_m) =0\] for all $j\in[m]$, $s\in[n_j]$. 
Thus by rearranging the calculation \eqref{extrehenkei}, we obtain \begin{align*}\label{eq:extaproach}\sum^{k}_{i=1}\sum_{a\in {\mathbf{A}}_{ij}}V_aX_i^{-1}V_a^T&=R_jR_j^T\Big(\sum^{k}_{i=1}\sum_{a\in {\mathbf{A}}_{ij}}V_aM_i^TM_iV_a^T\Big)R_jR_j^T\tag{2}\\
    &=R_j\Big(\sum^{k}_{i=1}\sum_{a\in {\mathbf{A}}_{ij}}\big(M_iV_a^TR_j\big)^T\big(M_iV_a^TR_j\big)\Big)R_j^T\\
    &=R_j\begin{pmatrix}
  \exp{(-{t_{j1}})}                                                \
                    & \text{\huge{0}}   \\
         \qquad\quad\quad\ \ddots               &                      \\
          \text{\huge{0}} &          \exp{(-{t_{jn_j}})}
\end{pmatrix}R_j^T=Y_j^{-1}\end{align*} for all $j\in[m]$.

\end{proof}
\noindent
\medskip
 \textbf{Remark.}
Choose any $\varepsilon>0$ from the proof of Theorem \ref{theorem:seq} again. Set 
\[Y_{\varepsilon,j}:=R^*_j({\bm{t}}^*)\begin{pmatrix}
  \exp{{t^*_{j1}}}                                                \
            &        & \text{\huge{0}}   \\
         &  \ddots               &                      \\
          \text{\huge{0}} &        &            
                                         \exp{t^*_{jn_j}}
\end{pmatrix}R^*_j({\bm{t}}^*)^T,\quad \text{$j\in[m]$}.\] By the above calculation \eqref{eq:extaproach}, Lemma \ref{lemma:epsilon} is interpreted as $(Y_{\varepsilon,j})_j$ approaching some extremiser in a sense.

\subsection{Ubiquity for the capacity}
Using the result of Theorem \ref{theorem:seq}, we can prove Theorem \ref{theorem:ubi} in a different way from the proof introduced in \cite{BGTP} and \cite{CDN}. The proof relies on the continuity of the capacity when $\mathbf{p}$ is a tuple of positive real numbers. (See \cite{BGTN}.)
\medskip
\noindent
\begin{proof}[Proof of Theorem \ref{theorem:ubi}]

\medskip

 Set \[\mathbf{F}(\mathbf{p}):=\{\mathbf{V}=(V_a)_a: \text{$\mathrm{Cap}(\mathbf{V},\mathbf{p})>0$ and $\sum_{j\in[m]}p_j\sum_{a\in {\mathbf{A}}_{ij}}V^T_aV_a=id_{\mathbb{R}^{d_i}}$ for all $i\in[k]$}\}.\] By the continuity of $\mathbf{V}\mapsto \mathrm{Cap}(\mathbf{V},\mathbf{p})$ (see \cite[Theorem 3.6]{BGTN}), $\mathbf{F}(\mathbf{p})$ is closed. Since every $p_j$ is positive, $\mathbf{F}(\mathbf{p})$ is compact.

Let $\mathbf{V}^{(k)}$ be a tuple of matrices that satisfies the condition of Theorem \ref{theorem:seq} in the case of  $\varepsilon=1/k$ for each $k\in\mathbb{N}$. By Proposition \ref{prop:equi} and \eqref{eq:proj}, $(\mathbf{V}^{(k)})_{k\in \mathbb{N}}\subset\mathbf{F}(\mathbf{p})$. Thus, the compactness of $\mathbf{F}(\mathbf{p})$ gives us a subsequence of $(\mathbf{V}^{(k)})_{k\in \mathbb{N}}$ and a tuple of matrices $\mathbf{G}\in\mathbf{F}(\mathbf{p})$ such that \[\lim_{k\to\infty}\|\mathbf{G}-\mathbf{V}^{(k)}\|=0.\] Furthermore, \eqref{eq:adj} shows us that \[\sum^{k}_{i=1}\sum_{a\in {\mathbf{A}}_{ij}}G_a(G_a)^T = \lim_{k\to\infty}\sum^{k}_{i=1}\sum_{a\in {\mathbf{A}}_{ij}}V_a^{(k)}(V_a^{(k)})^T=id_{{\mathbb{R}}^{n_j}}\] for all $j\in[m]$. Therefore, $(\mathbf{G},\mathbf{p})$ is geometric.

\end{proof}

\subsection{Upper bound on Brascamp--Lieb constants}

From now on, we consider the case that $(\mathbf{B}, \mathbf{p})$ is the Brascamp--Lieb datum with a tuple of linear maps $\mathbf{B}=(B_j\in\mathbb{R}^{n_j\times n})_{j\in[m]}$ and a tuple of positive exponents $\mathbf{p}=(p_j)_{j\in[m]}$. In this section, we will provide an upper bound on the Brascamp--Lieb constant $\mathrm{BL}(\mathbf{B},\mathbf{p})$ using ubiquity.

\medskip

First, we introduce $\mathbf{B}$-admissible sets. We denote by $\bm{1}_H$ the indicator vector of a set $H$. Following the paper of Dvir and Hu \cite{DH}, a subset $H\subset[m]$ is called $\mathcal{V}$-admissible if the subspaces $\mathcal{V}=(V_i\subset \mathbb{R}^l)_{i\in H}$ form a direct-sum decomposition of $\sum_{i=1}^m V_i$. Theorem 1.4 of \cite{DH} asserts that whenever
\[\textbf{p}\in \{\text{convex hull of $\bm{1}_H$: $H$ is $\mathcal{V}$-admissible}\},\]
the arrangement $(V_i)$ can be transformed by an invertible linear map so that $\sum^m_{i=1} p_i\mathrm{Proj}_{V_i}$ is arbitrarily close to the identity operator.

\begin{definition}
    We say that a set $J\subset[m]$ is a \textit{$\mathbf{B}$-admissible basis set} if 
\begin{equation}
    \sum_{j\in J}\dim(\mathrm{Im}B_j^T)=\dim\Big(\sum_{j\in J}\mathrm{Im}B_j^T\Big)=n.
\end{equation}
Set $E:=\{(j,s):j\in[m],s\in[n_j]\}$. We say $H\subset E$ is a \textit{good basis set} of $\mathbf{B}$ if there exists a $\mathbf{B}$-admissible basis set $J$ such that $H=\bigcup_{j\in J}\{(j,s):s\in[n_j]\}$.
\end{definition}

\noindent
\textbf{Notation.} We denote by $ab_{\mathbf{B}}$ the set of all $\mathbf{B}$-admissible basis sets and by $gb_{\mathbf{B}}$ the set of all good basis set of ${\mathbf{B}}$. 

\medskip
\noindent
Based on the proof of \cite[Lemma 3.1]{DH} (see also \cite{Bar}), we have the following upper bound. Assume that $0=:0\log0$.

\begin{proposition}\label{DHpropofBDD}
    Let $\mathbf{p}$ be in the convex hull of $(\bm{1}_J)_{J\in ab_{\mathbf{B}}}$ and $\mathbf{p}=\sum_{J\in ab_{\mathbf{B}}}\lambda_J\bm{1}_J$ where $\sum_{J\in ab_{\mathbf{B}}}\lambda_J=1$ and $\lambda_J\ge0$. Then we have 
    \begin{equation}\label{DHupper}
    \mathrm{BL}(\mathbf{B},\mathbf{p})\le\exp{\frac{1}{2}\bigg(\sum_{J\in ab_{\mathbf{B}}}\lambda_J\log\lambda_J-\sum^m_{j=1}n_jp_j\log{p_j}\bigg)}\cdot\prod_{J\in ab_{\mathbf{B}}}\big|\mathrm{det}\big(({B_j^T})_{j\in J}\big)\big|^{-\lambda_J}.
    \end{equation}
    In particular, we see that $(\mathbf{B},\mathbf{p})$ is feasible.
\end{proposition}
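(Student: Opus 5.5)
We bound the capacity from below, since $\mathrm{BL}(\mathbf{B},\mathbf{p})=\mathrm{BL_g}(\mathbf{B},\mathbf{p})=\mathrm{Cap}(\mathbf{B},\mathbf{p})^{-1/2}$ by Theorem \ref{Lieb} together with the identity for $k=1$, $\#\mathbf{A}_{1j}=1$. So \eqref{DHupper} amounts to showing, for every tuple $(Y_j\in\mathsf{PD}_{n_j})_{j\in[m]}$,
\[
\log\det\Big(\sum_{j=1}^m p_jB_j^TY_jB_j\Big)-\sum_{j=1}^m p_j\log\det Y_j\ \ge\ -\sum_{J}\lambda_J\log\lambda_J+\sum_j n_jp_j\log p_j+2\sum_J\lambda_J\log\big|\det\big((B_j^T)_{j\in J}\big)\big|.
\]
Here $J$ runs over $ab_{\mathbf{B}}$, and $(B_j^T)_{j\in J}$ is the $n\times n$ matrix obtained by placing the blocks $B_j^T$ side by side. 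This matrix is square because $\sum_{j\in J}n_j=n$. It is invertible because the definition of admissibility forces $\dim\mathrm{Im}B_j^T=n_j$ and the images to form a direct sum.

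\emph{Step 1: Cauchy--Binet.} Diagonalise $Y_j=R_j\,\mathrm{diag}(e^{t_{js}})\,R_j^T$ as in Section 2. Write $\bm{x}_{js}$ for the columns of $B_j^TR_j$, and set $w_{js}=p_je^{t_{js}}$. Then
\[
X=\sum_{(j,s)\in E}w_{js}\,\bm{x}_{js}\bm{x}_{js}^T.
\]
By Cauchy--Binet, $\det X=\sum_{S}\prod_{e\in S}w_e\,\det[\bm{x}_e]_{e\in S}^2$, where $S$ runs over all $n$-element subsets of $E$. Every term is nonnegative, so we may keep only the terms $S=H_J$ for $J\in ab_{\mathbf{B}}$, where $H_J$ is the good basis set attached to $J$.

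\emph{Step 2: the surviving terms.} For $S=H_J$ the matrix $[\bm{x}_e]_{e\in H_J}$ is $(B_j^T)_{j\in J}$ multiplied on the right by the block-orthogonal matrix $\mathrm{diag}(R_j)_{j\in J}$. Hence its squared determinant equals $\det\big((B_j^T)_{j\in J}\big)^2=:D_J^2$, independently of the $R_j$. Also
\[
\prod_{e\in H_J}w_e=\prod_{j\in J}p_j^{n_j}\det Y_j.
\]
This gives
\[
\det X\ \ge\ \sum_J\lambda_J\cdot\frac{D_J^2\prod_{j\in J}p_j^{n_j}\det Y_j}{\lambda_J},
\]
where the sum is over $J$ with $\lambda_J>0$.

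\emph{Step 3: weighted AM--GM.} Apply weighted AM--GM with weights $\lambda_J$, that is, concavity of $\log$:
\[
\log\det X\ \ge\ \sum_J\lambda_J\big(2\log D_J-\log\lambda_J\big)+\sum_J\lambda_J\sum_{j\in J}\big(n_j\log p_j+\log\det Y_j\big).
\]
Since $\sum_J\lambda_J\bm{1}_J=\mathbf{p}$, the last sum equals $\sum_j p_j\big(n_j\log p_j+\log\det Y_j\big)$. The $\log\det Y_j$ terms then cancel exactly against the denominator of the capacity, which yields the displayed inequality for every $(Y_j)$. Taking the infimum over $(Y_j)$ and exponentiating by $-1/2$ gives \eqref{DHupper}. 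The bound is finite because each $D_J\neq0$, so feasibility follows.

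\emph{Main obstacle.} The substantive points are the exact bookkeeping in Step 2 — that the orthogonal factors $R_j$ drop out of the determinants, and that the exponents combine into $\mathbf{p}$ precisely through $\sum_J\lambda_J\bm{1}_J=\mathbf{p}$ — and the care needed for terms with $\lambda_J=0$ under the convention $0\log0=0$. One could alternatively route this through the geometric approximations of Theorem \ref{theorem:seq} and \eqref{Unpoko}, but the direct Cauchy--Binet argument avoids that.
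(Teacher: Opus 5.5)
Your proposal is correct and follows essentially the same route as the paper: Cauchy--Binet for $\det X$, discarding every term except those indexed by good basis sets, observing that the block-orthogonal factor $\mathrm{diag}(R_j)_{j\in J}$ leaves $\det\big((B_j^T)_{j\in J}\big)^2$ unchanged, applying weighted AM--GM, and using $\sum_J\lambda_J\bm{1}_J=\mathbf{p}$ to cancel the $\det Y_j$ factors before invoking Lieb's theorem. One small correction: admissibility by itself does not force $\dim\mathrm{Im}B_j^T=n_j$, so the squareness of $(B_j^T)_{j\in J}$ actually rests on the tacit assumption that each $B_j$ is surjective; the paper also uses this implicitly, and the statement fails without it.
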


\begin{proof}
See Section 2.1 again. We use $\mathcal{F}$ to denote the family of all $n$-subsets of $[N]$ and set $t_H=\sum_{e\in H}\exp{(t_e+\log \gamma_e)}$ for each $H\in\mathcal{F}$. Based on the calculation in \cite[Lemma 3.1]{DH}, we obtain
\begin{align*}
\mathrm{det}\Big(&\sum_{j\in[m],s\in[n_j]}p_j\exp{t_{js}}\bm{x}_{js}\bm{x}_{js}^T\Big)=\mathrm{det}\Bigg([\bm{x}_{11},\ldots,\bm{x}_{mn_m}]\begin{pmatrix}
  p_1\exp{t_{11}}\bm{x}_{11}^T                                                            
         \\:\\:                 \\    
          p_m\exp{t_{mn_m}}\bm{x}_{mn_m}^T
\end{pmatrix}\Bigg)\\
&=\sum_{H\in\mathcal{F}}\exp{t_H}\cdot\mathrm{det}\big((\bm{x}_e)_{e\in H}\big)\cdot\mathrm{det}\big((\bm{x}_e)^T_{e\in H}\big) \qquad (\text{Cauchy--Binet formula})\\
&\ge \sum_{H\in gb_{\mathbf{B}},\lambda_H>0}\lambda_H\bigg(\frac{\exp{t_H}\cdot\mathrm{det}\big((\bm{x}_e)_{e\in H}\big)^2}{\lambda_H}\bigg)\\
&\ge \prod_{H\in gb_{\mathbf{B}},\lambda_H>0}\bigg(\frac{\exp{t_H}\cdot\mathrm{det}\big((\bm{x}_e)_{e\in H}\big)^2}{\lambda_H}\bigg)^{\lambda_H} \qquad\qquad (\text{AM-GM inequality})\\
&=\exp{\bigg(\sum^m_{j=1}n_jp_j\log{p_j}\bigg)}\cdot\exp{\langle\bm{\gamma},\bm{t}\rangle}\cdot\prod_{H\in gb_{\mathbf{B}},\lambda_H>0}\bigg(\frac{\mathrm{det}\big((\bm{x}_e)_{e\in H}\big)^2}{\lambda_H}\bigg)^{\lambda_H}
\end{align*}
where $[\bm{x}_{j1},\ldots,\bm{x}_{j1}]=B_j^TR_j$. Meanwhile, for each good basis set $H$, there exists an associated $\mathbf{B}$-admissible basis set $J=\{j_1,\ldots,j_k\}$ such that
\begin{align*}\label{Unko}
     \mathrm{det}\big((\bm{x}_e)_{e\in H}\big)&=\mathrm{det}(B^T_{j_1}R_{j_1},\ldots,B^T_{j_k}R_{j_k})  \tag{3}\\
     &=\mathrm{det}(B^T_{j_1},\ldots,B^T_{j_k})\cdot\mathrm{det}\begin{pmatrix}
  R_{j_1}                                               \
            &        & \text{\huge{0}}   \\
         &  \ddots               &                      \\
          \text{\huge{0}} &        &            
                                         R_{j_k}
\end{pmatrix}\\
&=\mathrm{det}\big(({B^T_j})_{j\in J}\big).
\end{align*}
Set $\lambda_J=\lambda_H$ for each associated ${\mathbf{B}}$-admissible basis set $J$. Using \eqref{Unko} and Theorem \ref{Lieb}, we have
\begin{align*}\label{DHupper}
  \mathrm{BL}(\mathbf{B},\mathbf{p})&\le\exp{\frac{1}{2}\bigg(\sum_{H\in gb_{\mathbf{B}}}\lambda_H\log\lambda_H-\sum^m_{j=1}n_jp_j\log{p_j}\bigg)}\cdot\prod_{H\in gb_{\mathbf{B}}}\big|\mathrm{det}\big((\bm{x}_e)_{e\in H}\big)\big|^{-\lambda_H}\\
  &=\exp{\frac{1}{2}\bigg(\sum_{J\in ab_{\mathbf{B}}}\lambda_J\log\lambda_J-\sum^m_{j=1}n_jp_j\log{p_j}\bigg)}\cdot\prod_{J\in ab_{\mathbf{B}}}\big|\mathrm{det}\big(({B^T_j})_{j\in J}\big)\big|^{-\lambda_J}.
\end{align*}
\end{proof}

Next, using ubiquity, we provide an upper bound that gives an improvement for certain data. Assume that $\mathrm{BL}(\mathbf{B},\mathbf{p})<\infty$. Choose $(\bm{t}^*,(R^*_j(\bm{t}^*))_{j\in[m]})$ in Lemma \ref{lemma:orth} and set \[B_j'=\begin{pmatrix}
  \|M{\bm{x}}_{j1}\|^{-1}                                                \
            &        & \text{\huge{0}}  \\
         &  \ddots               &                      \\
         \text{\huge{0}} &        &            
                                         \|M{\bm{x}}_{jn_{j}}\|^{-1}
\end{pmatrix}(R^*_j(\bm{t}^*))^TB_jM^T\]  for every $j\in[m]$. Then $\mathbf{B}'=(B_j')_{j\in[m]}$ is equivalent to $\mathbf{B}$. Using arguments as in the proof of Theorem \ref{theorem:ubi} and Theorem \ref{theorem:seq}, observe that $\mathbf{B}'$ satisfies the following properties. 

\begin{theorem}\label{prop:supergeoseq}
    We have
    \begin{equation}\label{eq:wholeadj}
B_j'(B_j')^T = id_{{\mathbb{R}}^{n_{j}}} \quad\text{for each $j\in[m]$ }
\end{equation} and
\begin{equation}\label{eq:seqproj}
\Big\| \sum^{m}_{j=1}p_j(B_j')^TB_j' - id_{{\mathbb{R}}^n} \Big\|< \varepsilon .
\end{equation}
\end{theorem}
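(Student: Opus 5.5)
Here $k=1$, $d_1=n$, $\#\mathbf{A}_{1j}=1$ with $V_j=B_j$, a single matrix $X=\sum_j p_jB_j^TR_j^*\,\mathrm{diag}(e^{t^*_{js}})\,R_j^{*T}B_j$, and an invertible $M$ with $M^TM=X^{-1}$. Compared with the proof of Theorem \ref{theorem:seq}, the roles of the two normalisations are swapped. The diagonal factor is now $\|M\bm{x}_{js}\|^{-1}$ instead of $\exp(t^*_{js}/2)$. This makes each $B_j'$ exactly row-orthonormal, and only the projection identity becomes approximate. I will therefore verify \eqref{eq:wholeadj} exactly using Lemma \ref{lemma:orth}, and then get \eqref{eq:seqproj} by comparing $B_j'$ with the matrix $\tilde B_j:=\mathrm{diag}(e^{t^*_{js}/2})R_j^{*T}B_jM^T$ from the proof of Theorem \ref{theorem:seq}, using \eqref{eq:norm}.

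Step 1 is \eqref{eq:wholeadj}. Write $D_j=\mathrm{diag}(\|M\bm{x}_{js}\|^{-1})$, where the columns $\bm{x}_{js}$ of $B_j^TR_j^*$ satisfy $M\bm{x}_{js}\neq0$ by \eqref{eq:norm}. Then
\[
B_j'(B_j')^T=D_j\big(MB_j^TR_j^*\big)^T\big(MB_j^TR_j^*\big)D_j=D_j\big(\langle M\bm{x}_{js},M\bm{x}_{js'}\rangle\big)_{s,s'}D_j .
\]
Lemma \ref{lemma:orth} with $k=1$ says the Gram matrix is diagonal, so the right-hand side is the identity. This step is exact and needs no $\varepsilon$.

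Step 2 is \eqref{eq:seqproj}. We have $B_j'=E_j\tilde B_j$ with $E_j=\mathrm{diag}\big(e^{-t^*_{js}/2}\|M\bm{x}_{js}\|^{-1}\big)$. By \eqref{eq:norm}, $\|M\bm{x}_{js}\|^2=(1-\varepsilon_{js}p_j^{-1})e^{-t^*_{js}}$, so $E_j=\mathrm{diag}\big((1-\varepsilon_{js}p_j^{-1})^{-1/2}\big)$ and $|\varepsilon_{js}|p_j^{-1}<\varepsilon/N$. The proof of Theorem \ref{theorem:seq} gives $\sum_jp_j\tilde B_j^T\tilde B_j=MXM^T=id_{\mathbb{R}^n}$. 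Hence
\[
\sum_j p_j(B_j')^TB_j'-id=\sum_j p_j\tilde B_j^T(E_j^2-I)\tilde B_j ,
\]
and $E_j^2-I=\mathrm{diag}\big(\delta_{js}/(1-\delta_{js})\big)$ with $\delta_{js}=\varepsilon_{js}p_j^{-1}$.

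To bound this, use $-c\,I\le E_j^2-I\le c\,I$ with $c=\max|\delta_{js}|/(1-|\delta_{js}|)$. Conjugating by $\tilde B_j$ and summing gives $-c\,I\le\sum_jp_j(B_j')^TB_j'-id\le c\,I$ in the Loewner order. The operator norm is therefore at most $c$, which is $O(\varepsilon/N)<\varepsilon$ for small $\varepsilon$. If the strict bound fails at the given $\varepsilon$, I will simply run Lemma \ref{lemma:epsilon} with a smaller tolerance at the start; this is harmless because that lemma works for every positive tolerance.

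The main obstacle is bookkeeping rather than analysis. One must track that $\bm{x}_{js}$ now denotes columns of $B_j^TR_j^*$ in the $k=1$ setting. One must also ensure the tolerance fed into Lemma \ref{lemma:epsilon} makes $c<\varepsilon$, for instance by choosing $\varepsilon\min_jp_j/(2N)$ and assuming $\varepsilon<1$. Equivalence of $\mathbf{B}'$ and $\mathbf{B}$ is immediate, since $D_jR_j^{*T}$ and $M^T$ are invertible.
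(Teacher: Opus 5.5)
Your proof is correct and is essentially the argument the paper intends. The paper gives no details beyond saying the two properties follow from the arguments of Theorem~\ref{theorem:seq}: Lemma~\ref{lemma:orth} makes the Gram matrix $(\langle M\bm{x}_{js},M\bm{x}_{js'}\rangle)_{s,s'}$ diagonal, which gives \eqref{eq:wholeadj} exactly, and \eqref{eq:norm} shows the factors $\|M\bm{x}_{js}\|^{-1}$ are close to $\exp(t^*_{js}/2)$. Your Loewner-order comparison with $\tilde B_j$ (using $\sum_j p_j\tilde B_j^T\tilde B_j=MXM^T=id$) and your shrinking of the tolerance in Lemma~\ref{lemma:epsilon} supply exactly the details the paper leaves implicit.
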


\begin{theorem}\label{prop:supergeoseq}
  For any $k\in\mathbb{N}$, there exists an invertible matrix $M^{(k)}$ and a tuple of orthogonal matrices $(R^{(k)}_j)_{j\in[m]}$ such that $\mathbf{B}^{(k)}=(B_j^{(k)})_{j\in[m]}$ satisfies
\begin{equation}\label{}
 B_j^{(k)} = \begin{pmatrix}
  \|M^{(k)}{\bm{x}}^{(k)}_{j1}\|^{-1}                                                \
            &        & \emph{\huge{0}}   \\
         &  \ddots              \\
          \emph{\huge{0}} &      &              
                                         \|M^{(k)}{\bm{x}}^{(k)}_{jn_{j}}\|^{-1}
\end{pmatrix}(R_j^{(k)})^TB_j(M^{(k)})^T
\end{equation}
and 
\begin{equation}\label{UbiquUpper}
\|\mathbf{B}^{(k)}-\mathbf{G}\|<\frac{1}{k} \quad \text{for all $k\in\mathbb{N}$}
\end{equation}
for some geometric datum $(\mathbf{G},\mathbf{p})$.
\end{theorem}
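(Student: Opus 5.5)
We apply the construction preceding the previous theorem with $\varepsilon=1/k$, and then repeat the compactness argument from the proof of Theorem \ref{theorem:ubi} in this dual normalisation. Here $k=1$ and each $\mathbf{A}_{1j}$ is a singleton, so $V_a=B_j$ and $M=M_1$ with $M^TM=X^{-1}$. For each $k\in\mathbb{N}$ we use Lemmas \ref{lemma:max}, \ref{lemma:epsilon} and \ref{lemma:orth} with tolerance $\min_j p_j/(kN)$ to choose $\bm{t}^{*(k)}$ and $R_j^{(k)}:=R_j^*(\bm{t}^{*(k)})$. We then put $M^{(k)}:=M$ for the corresponding $X$, and let $\bm{x}^{(k)}_{js}$ be the columns of $B_j^TR_j^{(k)}$. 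This gives $B_j^{(k)}$ in exactly the stated form. By \eqref{eq:norm} every $\|M^{(k)}\bm{x}^{(k)}_{js}\|$ is positive, so the diagonal factors are invertible and $\mathbf{B}^{(k)}$ is equivalent to $\mathbf{B}$.

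Next we establish the two properties of the previous theorem for $\mathbf{B}^{(k)}$. The $(s,s')$ entry of $B_j^{(k)}(B_j^{(k)})^T$ is $\langle M\bm{x}_{js},M\bm{x}_{js'}\rangle/(\|M\bm{x}_{js}\|\|M\bm{x}_{js'}\|)$. By Lemma \ref{lemma:orth} this vanishes for $s\neq s'$, and it is $1$ on the diagonal, which gives \eqref{eq:wholeadj}.

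For \eqref{eq:seqproj}, write $c_{js}:=\exp(t^*_{js})\|M\bm{x}_{js}\|^2=1-\varepsilon_{js}p_j^{-1}$, using \eqref{eq:norm}. Then
\[\sum_j p_j(B_j^{(k)})^TB_j^{(k)}=M\Big(\sum_{j,s}p_j\|M\bm{x}_{js}\|^{-2}\bm{x}_{js}\bm{x}_{js}^T\Big)M^T=M\Big(\sum_{j,s}p_jc_{js}^{-1}e^{t^*_{js}}\bm{x}_{js}\bm{x}_{js}^T\Big)M^T.\]
Comparing with $MXM^T=\mathrm{id}$, the difference from the identity is
\[\sum_{j,s}p_j(c_{js}^{-1}-1)\,e^{t^*_{js}}(M\bm{x}_{js})(M\bm{x}_{js})^T.\]
Each rank-one term $p_je^{t^*_{js}}(M\bm{x}_{js})(M\bm{x}_{js})^T$ is positive semidefinite, and the sum of these terms is the identity, so each term has norm at most $1$. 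Since $|c_{js}^{-1}-1|=O(1/(kN))$, the total error is $O(1/k)$, uniformly in the datum.

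Finally, the compactness step. Each $B_j^{(k)}$ has orthonormal rows, so the sequence lies in the compact set $\prod_j\{B\in\mathbb{R}^{n_j\times n}: BB^T=\mathrm{id}\}$, and this holds without invoking the continuity of capacity. Pass to a subsequence converging to some $\mathbf{G}$. The condition $G_jG_j^T=\mathrm{id}$ is preserved in the limit, and $\sum_jp_jG_j^TG_j=\mathrm{id}$ follows from the $O(1/k)$ bound. Hence $(\mathbf{G},\mathbf{p})$ is geometric. Relabelling the subsequence and discarding its early terms gives \eqref{UbiquUpper}; if needed, we instead select indices $k_\ell$ with error less than $1/\ell$ and rename them. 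The main obstacle is this re-indexing: a subsequence does not automatically satisfy $\|\mathbf{B}^{(k)}-\mathbf{G}\|<1/k$ for \emph{every} $k$. We handle it by defining the $k$-th element of the statement as the term $\mathbf{B}^{(k_\ell)}$ of the convergent subsequence that lies within $1/k$ of $\mathbf{G}$. That term still has the required form, with its own $M^{(k)}$ and $R_j^{(k)}$, since the form is produced by the same construction for every member of the sequence.
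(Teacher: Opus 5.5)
Your proposal is correct and is essentially the argument the paper intends. You construct $\mathbf{B}^{(k)}$ from the data of Lemma~\ref{lemma:orth} with tolerance shrinking in $k$, get orthonormal rows from Lemma~\ref{lemma:orth} and $\sum_j p_j(B_j^{(k)})^TB_j^{(k)}\to id_{\mathbb{R}^n}$ from \eqref{eq:norm}, and pass to a geometric limit by compactness as in the proof of Theorem~\ref{theorem:ubi}.

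Your compactness step, using that the rows are orthonormal, and your explicit re-indexing to get the bound $1/k$ for every $k$ are cleaner than the paper's version. The paper instead invokes continuity of the capacity to call $\mathbf{F}(\mathbf{p})$ closed, although continuity only makes the condition $\mathrm{Cap}>0$ open; this is harmless, since the limit is geometric and hence feasible anyway.
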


\medskip
\noindent
By the compactness of $\prod^m_{j=1} O_{n_j}(\mathbb{R})$, we obtain a subsequence of $k\in\mathbb{N}$ such that $R^{(k)}_j$ converges an orthogonal matrix $R^{\infty}_j$.   Now, using \eqref{Unpoko}, we have
\begin{equation}
    \mathrm{BL}(\mathbf{B},\mathbf{p})=\frac{\mathrm{det}(M^{{(k)}})}{\prod^m_{j=1}\prod^{n_j}_{s=1}\|M^{{(k)}}\bm{x}^{{(k)}}_{js}\|^{p_j}}\mathrm{BL}(\mathbf{B^{{(k)}}},\mathbf{p}).
\end{equation}

\medskip
\noindent
Thus, by the continuity of $\mathbf{B}\mapsto\mathrm{BL}(\mathbf{B},\mathbf{p})$ \cite[Theorem 1.1]{BBCF}, $\mathrm{BL}(\mathbf{G},\mathbf{p})=1$ and \eqref{UbiquUpper}, we have
\begin{align*}
    \mathrm{BL}(\mathbf{B},\mathbf{p})&=\limsup_{k\to\infty}\frac{\mathrm{det}(M^{{(k)}})}{\prod^m_{j=1}\prod^{n_j}_{s=1}\|M^{{(k)}}\bm{x}^{{(k)}}_{js}\|^{p_j}}\mathrm{BL}(\mathbf{B^{{(k)}}},\mathbf{p})\\
    &\le\limsup_{k\to\infty}\frac{\mathrm{det}(M^{{(k)}})}{\prod^m_{j=1}\prod^{n_j}_{s=1}\|M^{{(k)}}\bm{x}^{{(k)}}_{js}\|^{p_j}}.
\end{align*}

\begin{definition}
    Set $(\bm{g}_{j1},\ldots,\bm{g}_{jn_j})=B_j^TR^{\infty}_j$. We say that a set $B\subset E$ is a \textit{limit basis set} if $(\bm{g}_e)_{e\in B}$ is a basis of $\mathbb{R}^n$. 
\end{definition}

\medskip
\noindent
\textbf{Notation.} We denote by $b_{\infty}$ the set of all limit basis sets.

\medskip
\noindent
\textbf{Remark.} Immediately we see that if $\mathbf{p}=(p_j)_j$ is in the convex hull of $(\bm{1}_J)_{J\in ab_{\mathbf{B}}}$, $\bm{\gamma}=(p_j)_{js}$ is in the convex hull of $(\bm{1}_H)_{H\in gb_{\mathbf{B}}}$ and also in the convex hull of $(\bm{1}_B)_{B\in b_{\infty}}$. 

\begin{lemma}\label{uppbddgeo}
    Let $(\mathbf{B},\mathbf{p})$ be a feasible datum. Let $\bm{\gamma}=(\gamma_{js})_{(j,s)\in E}$, $\gamma_{ji}=p_j$ be in the convex hull of $(\bm{1}_B)_{B\in b_{\infty}}$ and $\bm{\gamma}=\sum_{B\in b_{\infty}}\lambda_B\bm{1}_B$ where $\sum_{B\in b_{\infty}}\lambda_B=1$ and $\lambda_B\ge0$. Then there exists a tuple of non-negative reals $(\lambda_B)_{B\in b_{\infty}}$ such that
\begin{equation}
    \mathrm{BL}(\mathbf{B},\mathbf{p})\le\prod_{B\in b_{\infty}}\big|\mathrm{det}\big((\bm{g}_e)_{e\in B}\big)\big|^{-\lambda_B}, \quad \sum_{B\in b_{\infty}}\lambda_B=1.
\end{equation}
\end{lemma}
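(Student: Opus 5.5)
Starting from the bound already obtained just before the definition of limit basis sets,
\[
\mathrm{BL}(\mathbf{B},\mathbf{p})\le\limsup_{k\to\infty}\frac{\mathrm{det}(M^{(k)})}{\prod^m_{j=1}\prod^{n_j}_{s=1}\|M^{(k)}\bm{x}^{(k)}_{js}\|^{p_j}},
\]
the plan is to bound each ratio from above by a Cauchy--Binet/AM--GM computation of the kind used in Proposition \ref{DHpropofBDD}, and then let $k\to\infty$. We may pass to the subsequence along which $R^{(k)}_j\to R^\infty_j$, so that $\bm{x}^{(k)}_{js}\to\bm{g}_{js}$. Write $\bm{y}^{(k)}_{e}:=M^{(k)}\bm{x}^{(k)}_e/\|M^{(k)}\bm{x}^{(k)}_e\|$ for the normalised vectors. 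These are exactly the columns of $(B^{(k)}_j)^T$, and by \eqref{UbiquUpper} they converge to the columns of $\mathbf{G}^T$. Here $(\mathbf{G},\mathbf{p})$ is geometric, so $\sum_j p_j G_j^TG_j=id$ with each $G_j$ having orthonormal rows.

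The key identity is as follows. For any $B\in b_\infty$, which has $|B|=n$ by definition,
\[
\mathrm{det}\big((\bm{x}^{(k)}_e)_{e\in B}\big)\cdot\mathrm{det}(M^{(k)})=\mathrm{det}\big((M^{(k)}\bm{x}^{(k)}_e)_{e\in B}\big)=\mathrm{det}\big((\bm{y}^{(k)}_e)_{e\in B}\big)\prod_{e\in B}\|M^{(k)}\bm{x}^{(k)}_e\|.
\]
Raise this to the power $\lambda_B$ and take the product over $B$. Since $\sum_B\lambda_B\bm{1}_B=\bm{\gamma}$ and $\sum_B\lambda_B=1$, the norms collect into $\prod_{j,s}\|M^{(k)}\bm{x}^{(k)}_{js}\|^{p_j}$ and the factor $\det M^{(k)}$ appears exactly once. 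Hence
\[
\frac{|\mathrm{det}(M^{(k)})|}{\prod_{j,s}\|M^{(k)}\bm{x}^{(k)}_{js}\|^{p_j}}=\prod_{B}\big|\mathrm{det}\big((\bm{y}^{(k)}_e)_{e\in B}\big)\big|^{\lambda_B}\prod_B\big|\mathrm{det}\big((\bm{x}^{(k)}_e)_{e\in B}\big)\big|^{-\lambda_B}.
\]
(The signs can be removed by taking absolute values, or $\det M^{(k)}>0$ can be arranged.)

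It then remains to show two things. First, $\prod_B|\det((\bm{y}^{(k)}_e)_{e\in B})|^{\lambda_B}\le 1+o(1)$. Second, $\det((\bm{x}^{(k)}_e)_{e\in B})\to\det((\bm{g}_e)_{e\in B})$, which is nonzero because $B\in b_\infty$. The second follows from continuity of the determinant together with $\bm{x}^{(k)}\to\bm{g}$. For the first, Hadamard's inequality gives $|\det((\bm{y}_e)_{e\in B})|\le\prod_{e\in B}\|\bm{y}_e\|=1$, since each $\bm{y}_e$ is a unit vector. So the first factor is at most $1$ for every $k$, and no limiting argument is needed there. Taking $\limsup$ then yields the claimed bound $\mathrm{BL}(\mathbf{B},\mathbf{p})\le\prod_{B\in b_\infty}|\det((\bm{g}_e)_{e\in B})|^{-\lambda_B}$.

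The main obstacle is bookkeeping rather than analysis. One must check that the columns of $B_j^TR^{(k)}_j$ are the $\bm{x}^{(k)}_{js}$ with the same indexing as in the definition of $\bm{g}$. One must also ensure that restricting to the convergent subsequence of $R^{(k)}_j$ does not change the $\limsup$ bound; this holds because the bound on $\mathrm{BL}(\mathbf{B},\mathbf{p})$ is an equality for every $k$, so any subsequence may be used. If the $\lambda_B$ in the statement are meant to be freely chosen, the plan above works for any convex representation of $\bm{\gamma}$. Existence of such a representation is guaranteed by the hypothesis that $\bm{\gamma}$ lies in the convex hull.
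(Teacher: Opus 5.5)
Your proposal is correct and is essentially the paper's proof. You apply Hadamard's inequality to the normalised vectors $\bm{y}^{(k)}_e$, whereas the paper applies it to $M^{(k)}\bm{x}^{(k)}_e$ directly, which is the same thing; you then use $\sum_B\lambda_B\bm{1}_B=\bm{\gamma}$ and $\sum_B\lambda_B=1$ to redistribute the norms and the single factor $\det M^{(k)}$, and pass to the limit via $R^{(k)}_j\to R^\infty_j$ and continuity of the determinant. Your opening mention of a Cauchy--Binet/AM--GM computation and the convergence of $\bm{y}^{(k)}_e$ to the columns of $\mathbf{G}^T$ are never actually used and can be dropped.
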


\begin{proof}
   By the convergence of $(R^{(k)}_j)_{j\in[m]}$ and the continuity of $A\mapsto\mathrm{det}(A)$, we see that for sufficiently large $k\in\mathbb{N}$, $(\bm{x}^{(k)}_e)_{e\in B}$ is the basis of $\mathbb{R}^n$ for every $B\in b_{\infty}$. Using Hadmard's inequality, we have
\begin{equation}
  |\mathrm{det}(M^{(k)})|  \big|\mathrm{det}\big((\bm{x}^{(k)}_e)_{e\in B}\big)\big|=\big|\mathrm{det}\big((M^{(k)}\bm{x}^{(k)}_e)_{e\in B}\big)\big|\le\prod_{e\in B}\|M^{(k)}\bm{x}^{(k)}_e\|.
\end{equation}

\noindent
Hence, we have

\begin{align*}
    \frac{\mathrm{det}(M^{{(k)}})}{\prod^m_{j=1}\prod^{n_j}_{s=1}\|M^{{(k)}}\bm{x}^{{(k)}}_{js}\|^{p_j}}&=\frac{\mathrm{det}(M^{{(k)}})^{\sum_{B\in b_{\infty}}\lambda_B}}{\prod_{B\in b_{\infty}}\prod_{e\in B}\|M^{{(k)}}\bm{x}^{{(k)}}_{e}\|^{\lambda_B}}\\
    &=\prod_{B\in b_{\infty}}\Bigg(\frac{\mathrm{det}(M^{{(k)}})}{\prod_{e\in B}\|M^{{(k)}}\bm{x}^{{(k)}}_{e}\|}\Bigg)^{\lambda_B}\\
    &\le\prod_{B\in b_{\infty}}\big|\mathrm{det}\big((\bm{x}^{(k)}_e)_{e\in B}\big)\big|^{-\lambda_B}
\end{align*}
for any sufficiently large $k\in\mathbb{N}$. Hence we obtain
\begin{align*}
    \mathrm{BL}(\mathbf{B},\mathbf{p})&\le\limsup_{k\to\infty}\frac{\mathrm{det}(M^{{(k)}})}{\prod^m_{j=1}\prod^{n_j}_{s=1}\|M^{{(k)}}\bm{x}^{{(k)}}_{js}\|^{p_j}}\le\lim_{k\to\infty}\prod_{B\in b_{\infty}}\big|\mathrm{det}\big((\bm{x}^{(k)}_e)_{e\in B}\big)\big|^{-\lambda_B}\\
    &=\prod_{B\in b_{\infty}}\big|\mathrm{det}\big((\bm{g}_e)_{e\in B}\big)\big|^{-\lambda_B}.
\end{align*}
\end{proof}

\medskip
\noindent
Observe that using Proposition \ref{DHpropofBDD}, Lemma \ref{uppbddgeo} and \eqref{Unko} for $R^{\infty}_j\in O_{n_j}(\mathbb{R})$, we have the following. 

\begin{theorem}\label{upperBLbdd}
     Let $\mathbf{p}$ be in the convex hull of $(\bm{1}_J)_{J\in ab_{\mathbf{B}}}$ and $\mathbf{p}=\sum_{J\in ab_{\mathbf{B}}}\lambda_J\bm{1}_J$ where $\sum_{J\in ab_{\mathbf{B}}}\lambda_J=1$ and $\lambda_J\ge0$. Then we have
\begin{equation}\label{iikannjinoupper}
    \mathrm{BL}(\mathbf{B},\mathbf{p})\le\prod_{J\in ab_{\mathbf{B}}}\big|\mathrm{det}\big(({B^T_j})_{j\in J}\big)\big|^{-\lambda_J}.
\end{equation}
\end{theorem}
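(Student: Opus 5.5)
The plan is to combine Lemma \ref{uppbddgeo} with the observation \eqref{Unko}, applied to the limiting orthogonal matrices $R^{\infty}_j$. No entropy term is needed, unlike in Proposition \ref{DHpropofBDD}.

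First, feasibility. Since $\mathbf{p}$ lies in the convex hull of $(\bm{1}_J)_{J\in ab_{\mathbf{B}}}$, Proposition \ref{DHpropofBDD} gives $\mathrm{BL}(\mathbf{B},\mathbf{p})<\infty$. This justifies the construction of $\mathbf{B}^{(k)}$, of $R^{(k)}_j\to R^{\infty}_j$, and of the vectors $\bm{g}_{js}$ preceding Lemma \ref{uppbddgeo}.

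Next, we transfer the convex combination to limit basis sets. Given $\mathbf{p}=\sum_J\lambda_J\bm{1}_J$, each $J\in ab_{\mathbf{B}}$ determines the good basis set $H_J=\bigcup_{j\in J}\{(j,s):s\in[n_j]\}$, and $\bm{\gamma}=\sum_J\lambda_J\bm{1}_{H_J}$. We check that each $H_J$ is a limit basis set. By \eqref{Unko} with $R_j=R^{\infty}_j$,
\[
\det\big((\bm{g}_e)_{e\in H_J}\big)=\det\big((B_j^TR^{\infty}_j)_{j\in J}\big)=\det\big((B^T_j)_{j\in J}\big)\cdot\prod_{j\in J}\det R^{\infty}_j .
\]
The right-hand side is nonzero because $J$ is admissible: the images of the $B_j^T$ form a direct sum equal to $\mathbb{R}^n$ with $\sum_{j\in J}n_j=n$, so each $B_j^T$ has full column rank and the block matrix is invertible. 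Hence $H_J\in b_\infty$ and $|\det((\bm{g}_e)_{e\in H_J})|=|\det((B^T_j)_{j\in J})|$. Distinct $J$ give distinct $H_J$, so we may set $\lambda_B=\lambda_J$ if $B=H_J$ and $\lambda_B=0$ otherwise. This is a valid convex decomposition of $\bm{\gamma}$ over $b_\infty$.

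Finally, we apply Lemma \ref{uppbddgeo} with these weights. The factors with $\lambda_B=0$ equal $1$. Looking at its proof, the bound holds for any such decomposition: one needs only $\sum_B\lambda_B\bm{1}_B=\bm{\gamma}$, which gives $\prod_{j,s}\|M^{(k)}\bm{x}^{(k)}_{js}\|^{p_j}=\prod_B\prod_{e\in B}\|M^{(k)}\bm{x}^{(k)}_e\|^{\lambda_B}$, together with Hadamard's inequality. We therefore obtain
\[
\mathrm{BL}(\mathbf{B},\mathbf{p})\le\prod_{J}\big|\det\big((\bm{g}_e)_{e\in H_J}\big)\big|^{-\lambda_J}=\prod_{J\in ab_{\mathbf{B}}}\big|\det\big((B^T_j)_{j\in J}\big)\big|^{-\lambda_J},
\]
which is \eqref{iikannjinoupper}.

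The main point of care is that Lemma \ref{uppbddgeo} is stated as an existence claim. We need it for our prescribed $\lambda$, so we rerun its Hadamard argument with the given weights rather than cite the statement. We also need the sign of $\det M^{(k)}$ to be handled, which is done by taking absolute values throughout.
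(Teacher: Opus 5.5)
Your proposal is correct and is essentially the paper's own argument. You get feasibility from Proposition~\ref{DHpropofBDD}, run the Hadamard argument of Lemma~\ref{uppbddgeo} with the weights $\lambda_J$ placed on the good basis sets $H_J$ (which are limit basis sets by \eqref{Unko} with $R^{\infty}_j$), and use \eqref{Unko} again to replace $|\det((\bm{g}_e)_{e\in H_J})|$ by $|\det((B_j^T)_{j\in J})|$. You were right to rerun the lemma with the prescribed weights rather than cite its existentially phrased statement, and the identity $\sum_{j\in J}n_j=n$ you use is implicit in the statement, since the block matrix must be square for its determinant to be defined.
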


\medskip
\noindent
\textbf{Remark.} 

\noindent
(1) In the rank-one case, Barthe has shown that the assumption of Theorem \ref{upperBLbdd} is a necessary and sufficient condition for $(\mathbf{B},\mathbf{p})$ to be feasible \cite{Bar}. 

\noindent
(2) An argument based on multilinear interpolation can also be used to prove \eqref{iikannjinoupper}.

\medskip
\noindent
Theorem \ref{upperBLbdd} gives us the following corollary.

\begin{corollary}\label{upperQ}
    Let $\mathbf{p}$ be in the convex hull of $(\bm{1}_J)_{J\in ab_{\mathbf{B}}}$. Let $B_j$ be a matrix of which every entry is the form of $\frac{c_{kl}}{d}$ where $(c_{kl})_{kl}\in\mathbb{Z}^{n_j\times n}$ and $d\in\mathbb{N}$. Then we have
\begin{equation}
    \mathrm{BL}(\mathbf{B},\mathbf{p})\le d^n.
\end{equation} 
\end{corollary}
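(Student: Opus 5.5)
Apply Theorem \ref{upperBLbdd} directly and bound each determinant from below. Fix a decomposition $\mathbf{p}=\sum_{J\in ab_{\mathbf{B}}}\lambda_J\bm{1}_J$ with $\lambda_J\ge 0$ and $\sum_J\lambda_J=1$, which exists by hypothesis. Theorem \ref{upperBLbdd} gives
\[
\mathrm{BL}(\mathbf{B},\mathbf{p})\le\prod_{J\in ab_{\mathbf{B}}}\big|\det\big((B_j^T)_{j\in J}\big)\big|^{-\lambda_J}.
\]
It therefore suffices to show $\big|\det\big((B_j^T)_{j\in J}\big)\big|\ge d^{-n}$ for every $J\in ab_{\mathbf{B}}$. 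Indeed, then each factor is at most $d^{n\lambda_J}$, and the product is at most $d^{n\sum_J\lambda_J}=d^n$.

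For the lower bound, fix $J\in ab_{\mathbf{B}}$. By definition $\sum_{j\in J}\dim\mathrm{Im}B_j^T=n$. In the setting where the determinant in \eqref{Unko} makes sense, each $B_j$ has full row rank $n_j$, so $\sum_{j\in J}n_j=n$ and $(B_j^T)_{j\in J}$ is an $n\times n$ matrix. Since $\dim\big(\sum_{j\in J}\mathrm{Im}B_j^T\big)=n$, this matrix is invertible, so its determinant is nonzero. Write $B_j=\frac1d C_j$ with $C_j\in\mathbb{Z}^{n_j\times n}$. Then
\[
\det\big((B_j^T)_{j\in J}\big)=d^{-n}\det\big((C_j^T)_{j\in J}\big).
\]
The right-hand determinant is a nonzero integer, hence has absolute value at least $1$. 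This gives the required bound.

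The main obstacle is that the definition of $\mathbf{B}$-admissible basis set only controls $\dim\mathrm{Im}B_j^T$, not $n_j$. If some $B_j$ is rank-deficient, the block matrix $(B_j^T)_{j\in J}$ is not square and \eqref{Unko} is ill-defined. The first fix I would try is to read the statement, as \eqref{Unko} implicitly does, under the standing convention that each $B_j$ is surjective. This is harmless: a feasible datum with $\mathbf{p}$ in this convex hull forces $\dim\mathrm{Im}B_j^T=n_j$ in the relevant range. Failing that, replace the determinant by the $n$-dimensional volume of a basis extracted from the integer columns, which is still at least $d^{-n}$ by the same integrality argument.

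The one remaining point to check is that the common denominator $d$ is the same for all $j$. Only then does the factor $d^{-n}$ pull out uniformly. The hypothesis, with a single $d\in\mathbb{N}$, supplies exactly this.
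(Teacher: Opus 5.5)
Your proposal is correct and follows the paper's proof. You apply Theorem \ref{upperBLbdd}, pull $d^{-n}$ out of each $n\times n$ determinant, use that a nonzero integer determinant has absolute value at least $1$, and finish with $\sum_J\lambda_J=1$. You are in fact slightly more careful than the paper, which leaves two points implicit: that $\det(C_J)\neq 0$, guaranteed by admissibility, and that the block matrix is square, which follows from surjectivity of the $B_j$ (forced by the convex-hull hypothesis together with $n=\sum_j p_jn_j$).
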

   
\begin{proof}
   For each $\mathbf{B}$-admissible basis set $J$, we have 
 \[\mathrm{det}\big(({B^T_j})_{j\in J}\big)=d^{-n}\mathrm{det}(C_J)\]
 where $C_J$ is the matrix with integer entries. Since $\mathrm{det}(C_J)\in \mathbb{Z}$, by Theorem \ref{upperBLbdd}, we obtain
 \[\mathrm{BL}(\mathbf{B},\mathbf{p})\le\prod_{J\in ab_{\mathbf{B}}}\big|d^{-n}\mathrm{det}(C_J)\big|^{-\lambda_{J}}=d^{n}\cdot\prod_{J\in ab_{\mathbf{B}}}\big|\mathrm{det}(C_J)\big|^{-\lambda_{J}}\le d^{n}.\]
\end{proof}

\medskip
\medskip
\noindent
\textbf{Acknowledgments.} The author would like to express sincere gratitude to her advisor, Neal Bez, for his continuous guidance, encouragement, and invaluable suggestions throughout the preparation of this paper. His insightful comments and careful reading of earlier drafts greatly improved the content of this work.

\medskip
\medskip
\noindent
\small\textsc{Graduate School of Mathematical Sciences, The University of Tokyo, 3-8-1 Komaba, Meguro-ku, Tokyo 153-8914, Japan}

\noindent
\textit{Email address}: \texttt{mireille-labeille@g.ecc.u-tokyo.ac.jp}

\end{document}